\title{Extensions of Billingsley's Theorem via Multi-Intensities}
\author{Richard Arratia  \\ Fred Kochman \\ Victor S. Miller}
\date{June 2012}
\def\e{\mathbb{E} \,}
\def\p{\mathbb{P}}
\def\P{\mathbb{P}}
\def\eps{\varepsilon}
\def\diam{{\rm diam}\,}
\def\vol{{\rm vol}\,}
\def\om{{\omega}}
\def\Om{{\Omega}}
\newcommand{\cS}{{\cal S}}
\newcommand{\pp}{{\frak{p}}}
\newcommand{\qq}{{\frak{q}}}
\newcommand{\ignore}[1]{}
\newcommand{\gem}{GEM}
\newtheorem{theorem}{Theorem}
\newtheorem{lemma}{Lemma}
\newtheorem{proposition}{Proposition}
\newtheorem{corollary}{Corollary}
\begin{document}

\maketitle

\begin{abstract} 
Let $p_1 \ge p_2 \ge \dots$ be the prime factors of a
random integer chosen uniformly from $1$ to $n$, and let
$$ 
\frac{\log p_1}{\log n}, \frac{\log p_2}{\log n}, \dots
$$
be the sequence of scaled log factors.
Billingsley's Theorem (1972), in its modern formulation, asserts that
the limiting  
process, as $n \to \infty$, is the Poisson-Dirichlet process with parameter $\theta =1$.
 
In this paper we give a new proof, inspired by the 1993 proof by
Donnelly and Grimmett, and extend the result to factorizations of
elements of normed arithmetic semigroups satisfying certain growth
conditions, for which the limiting Poisson-Dirichlet process need not
have $\theta =1$.  We also establish Poisson-Dirichlet limits, with
$\theta \ne 1$, for ordinary integers conditional on the number of
prime factors deviating from the usual value $\log \log n$.

At the core of our argument is a purely probabilistic lemma
giving a new criterion for convergence in distribution to a
Poisson-Dirichlet process, from which the number-theoretic
applications follow as straightforward corollaries. The lemma uses
ingredients similar to those employed by Donnelly and Grimmett, but
reorganized so as to allow subsequent number theory input to be
processed as rapidly as possible.

A by-product of this work is a new characterization of
Poisson-Dirichlet processes in terms of multi-intensities.
\end{abstract}

\section{Introduction}

In this paper we provide a new criterion for convergence
in distribution to PD($\theta$) --- the Poisson-Dirichlet
process with parameter $\theta$ --- and then apply it to extend
Billingsley's theorem on the asymptotic distribution of log prime
factors of a random integer to much more general number theoretic
contexts.

The new criterion is an application of an existing general weak
convergence lemma from \cite{AK}, restated here as Proposition
\ref{proposition 1}, which supplements Alexandrov's Portmanteau
Theorem on equivalent conditions for weak
convergence~\cite{BillingsleyWeak2}.  That lemma governs, in
particular, the convergence of a sequence of discrete nonlattice
random variables to a continuum limit possessing a smooth density.
Here, we adapt it for direct application to a sequence of random
multisubsets of (0,1], with a hypothesis yielding the limiting
PD($\theta$).

The following is the simplest version of our new criterion:
Given a sequence $A_n$ of multisubsets of $(0,1]$, let
  $T_n$ denote the sum of the elements of $A_n$, counting
  multiplicities,
and for any set $S \subset (0,1]$ let $|A \cap S|$ denote the cardinality of the 
intersection, also counting multiplicities. 
Also, let $L(n)=(L_1(n),L_2(n),\dots)$, where $L_i(n) := $ the $i^{\rm th}$
largest
element of $A_n$ if $i \le |A_n|$, and   $L_i(n) := 0$ if $i >
|A_n|$.

Lemma~\ref{maintheta1} then asserts the following: 
\emph{Suppose that
$T_n \le 1$ almost surely, for all $n$, and that for any collection of
disjoint closed intervals $I_i = [a_i,b_i] \subset (0,1], i =
  1,\dots,k$ satisfying $b_1 + \cdots + b_k <1$, for any $k \ge 1$, 
we have 
\begin{equation}\label{simple}
\liminf \e |A_n \cap I_1|\cdots |A_n \cap I_k| \ge 
\prod_{i=1}^k  \log\frac{b_i}{a_i} 
\end{equation}
as $n \to \infty.$
Then $L(n)$ converges in distribution to a ${PD}(1)$.
}

The more general version, Lemma~\ref{maintheta}, allows a limiting
PD($\theta$) with $\theta \ne 1$, and has a somewhat more complicated
expression on the right-hand side of the inequality. Nonetheless both
versions are easy to use in our applications.

In proving Billingsley's original theorem, for instance, the multiset
$A_n$ appearing above consists of the log prime factors, $\log_n p$, of a random 
integer in $[1,n]$; and so each number
$|A_n \cap I_i|$ is simply the number of prime factors, counting multiplicities, 
falling into $[n^{a_i},n^{b_i}]$. 
The main step of the proof, 
confirmation of the hypothesis \eqref{simple}, 
reduces to scarcely more than a citation of Mertens' formula \cite{ingham}
\begin{equation}\label{Mertensoriginal}
\sum_{p \le x} \frac{1}{p} = \log \log x + B +o(1),
\end{equation}
where $B$ is a constant.
 
In the later sections of this paper, we apply our new criterion to give
\begin{itemize}
\item A reproof of the original Billingsley's theorem;
\item a generalization to a class of normed arithmetic semigroups
for which an analogue of Landau's prime ideal theorem is valid, still
yielding a PD(1) limit;
\item a generalization to a class of normed arithmetic semigroups
satisfying the growth hypotheses of Bredikhin's theorem (\cite{postnikov},
Section 2.5), yielding PD($\theta$) limits with $\theta \ne 1$; and
\item a final generalization to ordinary integers, conditional
on the number of prime factors in the selected integer deviating unusually from the
prescription of Tur{\'a}n's theorem; here too the limiting
PD($\theta$) has $\theta \ne 1$.
\end{itemize}

 Our work was inspired by the proof of Billingsley's theorem
given by Donnelly and Grimmett \cite{DonnellyGrimmett}, whose
ingredients are included amongst our own.  Our initial motivation
was to place as much of the burden as possible on self-contained
probability tools and isolate the use of number theoretic input. One
difference in our approach is that, internal to the new convergence
lemma, we work directly with the density function of the \gem\
distribution instead of aiming for a limit process of the component
uniforms, as they do.

\section{Probability Background}

\subsection{PD($\theta$) and GEM($\theta$)}\label{intro1}
 For our present purposes the 
{\em Poisson-Dirichlet point process $PD(\theta)$ with parameter $\theta>0$} can
be characterized in the following two equivalent ways:
\begin{itemize}
\item[  I)  ]PD($\theta$)  is  the Poisson point
process with scale invariant intensity measure $\theta\, dx/x$ on $(0,1)$, conditioned on the sum of
the arrivals being 1;
\item[ II)  ]
Let $U_1, U_2,\dots$ be a sequence of independent uniforms on $(0,1)$, let
$V_i = U_i^{1/\theta}$ for $i\ge1$, and let $G_1 = 1-V_1, G_2 =
V_1(1-V_2),$ \mbox{$ G_3 = V_1 V_2(1-V_3), \dots$.}
 Let $L_1 \ge L_2\ge L_3 \ge \dots$ be the outcome of sorting $G_1, G_2, G_3,\dots$ into 
descending order, or {\em ranking} them, as we will say.  Then $L_1, L_2, L_3,\dots$ are the 
arrivals of the PD($\theta$) in $(0,1)$.
\end{itemize}
For information on this process and its role in combinatorial modeling, as well as alternative
characterizations with proofs of equivalence
see, e.g., \cite{abtbook,ABTarticle,kingman}.

 The GEM($\theta$) \emph{ process} 
$$
G = (G_1,G_2,\dots),
$$ appearing in the second characterization, has itself been well
studied (see, e.g.,\cite{abtbook}). It is easy to see that with probability one we have
$G_1+ \cdots + G_k < 1$, for all $k$. So, in particular, there are no
positive accumulation points and hence ranking $G$ is actually
possible.
 
 We will exploit the following result from 
\cite{joyce};  see also \cite{BillingsleyWeak2}, p. 42--43. 
\begin{lemma}\label{joyce}
 Let 
$$
G(n) = (G_1(n),G_2(n),\dots)
$$
be a sequence of processes of nonnegative numbers, each with almost surely finite sum; 
and for each $n$ let
$$
L(n) = (L_1(n),L_2(n), \dots)
$$ 
be the ranked version of $G(n)$.
Suppose $G(n)$ converges in distribution to \gem($\theta$).   Then
$L(n)$ converges
in distribution to PD($\theta$).
\end{lemma}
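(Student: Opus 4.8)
The plan is to deduce the lemma from the continuous mapping theorem applied to the ranking operation. Write $R$ for the map sending a sequence of nonnegative reals to its nonincreasing rearrangement, so that $L(n) = R(G(n))$, and recall from the second characterization of PD($\theta$) in Section~\ref{intro1} that if $G \sim$ \gem($\theta$) then $R(G) \sim$ PD($\theta$). Since $G(n)$ converges in distribution to \gem($\theta$) by hypothesis, it will be enough to verify that $R$ is continuous at \gem($\theta$)-almost every sequence. I would set this up on the state space $\Delta = \{x = (x_1,x_2,\dots): x_i \ge 0, \ \sum_i x_i \le 1\}$ with the product (coordinatewise) topology, which is compact and metrizable, contains the \gem($\theta$) sequences and --- in the applications, where the total sums are at most $1$ --- each $G(n)$, and is carried by $R$ into the compact ordered simplex $\overline{\nabla} = \{\ell_1 \ge \ell_2 \ge \cdots \ge 0 : \sum_i \ell_i \le 1\}$.

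The heart of the matter, and the step I expect to be the main obstacle, is the continuity of $R$. It is genuinely false that $R$ is continuous everywhere on $\Delta$: a small bump pushed out to coordinate $n$ is invisible in the coordinatewise limit yet can dominate the ranking, so mass can ``escape to infinity.'' The key point is that this pathology cannot occur at a limit point $g$ with $\sum_i g_i = 1$. Given $\eps > 0$, pick $N$ with $\sum_{i \le N} g_i > 1 - \eps$; then for any $x^{(n)} \to g$ coordinatewise in $\Delta$ we have $\sum_{i \le N} x^{(n)}_i \to \sum_{i \le N} g_i$, so eventually $\sum_{i \le N} x^{(n)}_i > 1 - \eps$ and hence $\sum_{i > N} x^{(n)}_i < \eps$. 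Using the elementary fact that $R$ is $1$-Lipschitz for the $\ell^1$ distance (reduce the infinite-sequence case to the classical finite-dimensional rearrangement inequality by truncation and Fatou), I would then estimate: discarding the coordinates beyond $N$ moves $R(x^{(n)})$ by less than $\eps$ in $\ell^1$; the truncated sequences converge to the truncation of $g$ by continuity of finite-dimensional sorting; and $R$ applied to that truncation lies within $\eps$ in $\ell^1$ of $R(g)$. Hence $\limsup_n \|R(x^{(n)}) - R(g)\|_1 \le 2\eps$, and since $\eps$ was arbitrary, $R(x^{(n)}) \to R(g)$ in $\ell^1$, a fortiori coordinatewise.

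Since \gem($\theta$) has $\sum_i G_i = 1$ almost surely, the set of discontinuities of $R$ is \gem($\theta$)-null, and the continuous mapping theorem yields $L(n) = R(G(n)) \Rightarrow R(G) \sim$ PD($\theta$), which is the assertion. If one prefers not to assume $\sum_i x_i \le 1$ for the approximating sequences, the same conclusion follows after instead requiring that the total sums converge in distribution to $1$: via the Skorokhod representation one realizes $G(n) \to G$ almost surely coordinatewise with $\sum_i G_i(n) \to 1 = \sum_i G_i$ almost surely, applies Scheff\'e's lemma to upgrade this to almost sure convergence in $\ell^1$, and then uses that $R$ is continuous (indeed $1$-Lipschitz) for $\ell^1$. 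This is the form in which the lemma gets used below, since in each of our applications the scaled log prime factors sum to at most $1$.
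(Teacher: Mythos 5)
The paper does not actually prove this lemma: it is quoted from \cite{joyce} (see also \cite{BillingsleyWeak2}, pp.~42--43), so the relevant comparison is with the argument of that reference, and your proposal is essentially a correct reconstruction of it. The standard proof is exactly your route: show that the ranking map $R$ is continuous, for the product topology on the simplex $\{x_i\ge 0,\ \sum_i x_i\le 1\}$, at every point whose coordinates sum to $1$, and then apply the continuous mapping theorem together with the fact that GEM$(\theta)$ puts full mass on such points and that $R(\mathrm{GEM}(\theta))=\mathrm{PD}(\theta)$ by characterization~II. Your continuity argument --- capture mass $1-\eps$ of the limit in the first $N$ coordinates, use the simplex constraint to force the tails of the approximants below $\eps$, and control everything via the $1$-Lipschitz property of decreasing rearrangement in $\ell^1$ --- is sound. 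You have also correctly put your finger on a genuine imprecision in the statement as written: with only ``almost surely finite sum'' assumed and convergence in distribution read in the product topology, the conclusion can fail (overwrite the $n$th coordinate of a GEM$(\theta)$ sequence by $1/2$; the resulting $G(n)$ still converges coordinatewise to GEM$(\theta)$, yet $L_1(n)\ge 1/2$ always, so $L(n)$ cannot converge to PD$(\theta)$). Some tail control is indispensable, either $\sum_i G_i(n)\le 1$ a.s.\ --- which is exactly how the lemma is invoked throughout the paper, cf.\ \eqref{k hyp 2} and \eqref{theta k hyp 2} --- or your alternative hypothesis $\sum_i G_i(n)\Rightarrow 1$, for which your Skorokhod--Scheff\'e upgrade to almost sure $\ell^1$ convergence is correct. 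In short: the proof is right, it is the proof of the cited source, and your flagged caveat is a real (if harmless in context) gap in the lemma's phrasing.
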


 For each $k,$ the first $k$ coordinates $G_1,\dots, G_k$ of the \gem($\theta$) 
possess a joint probability 
density function $f_{\theta}$, with the formula (see 
(5.28) in Section~5.4 of~\cite{abtbook})
\begin{equation}\label{fdds G}
  f_{\theta}(x_1,\dots,x_k) = 
\frac{\theta^k(1-x_1 - \cdots -x_k)^{\theta-1}}
     {1(1-x_1)(1-x_1-x_2)\cdots (1-x_1-\cdots-x_{k-1})} 
\end{equation}
for $ (x_1,x_2,\dots,x_k)   \in U$,
where $U \subset \mathbb{R}^k$  is the open set
\begin{equation}\label{def U G}
 U := \{  x \in (0,1)^k: 0 <x_1 + \dots + x_k < 1 \  \mbox{ and } x_i \neq x_j
\mbox{ for } 1 \le i < j \le k  \}.
\end{equation}
While exclusion of the subdiagonals is not usually imposed in the
definition of $U$, excluding a set of Lebesgue measure $0$ does no
harm, and it definitely finesses a technical issue arising in the
proof of our main lemma.

\subsection{A Proposition on Weak Convergence}

 For the reader's convenience, we quote Proposition 2.1 from
\cite{AK}, upon which the new result will depend.  Here we let
$X$ be a random element of $\mathbb{R}^k$ with density $f$ of the form
$f = f_U 1_U$, where $U \subset \mathbb{R}^k$ is open, the function
$f_U : U \to (0,\infty)$ is continuous, and $1_U: \mathbb{R}^k \to
\{0,1\}$ is the indicator function of $U$.

\begin{proposition}\label{proposition 1}
 Let $X$ be defined as above, and 
let $X_n$, $n=1,2,\dots$, be arbitrary random elements of $\mathbb{R}^k$.

 Suppose that, for every $\eps > 0$,  there exists $R < \infty$ for which
every  closed coordinate box $B \subset U \mbox{  satisfying }$
\begin{equation}\label{lemma hyp 0}
\diam(B) < d(B, U^c)/R 
\end{equation}
also satisfies
\begin{equation}\label{lemma hyp 1}
  \liminf_n \p(X_n \in B) \ge (1-\eps)\  \vol(B)\  \inf_B f.
\end{equation}
Then $X_n \Rightarrow X$. That is, as $n \to \infty$,
$X_n$ converges in distribution to $X$.
\end{proposition}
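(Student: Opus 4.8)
\medskip
\noindent\textbf{Proof proposal.}

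The plan is to verify the open-set form of the Portmanteau theorem \cite{BillingsleyWeak2}: it suffices to show $\liminf_n \P(X_n \in G) \ge \P(X \in G)$ for every open $G \subseteq \R^k$. Since $X$ has density $f = f_U 1_U$, the set $W := G \cap U$ is open and $\P(X \in G) = \int_W f$, so the entire task reduces to proving $\liminf_n \P(X_n \in G) \ge \int_W f$ for an arbitrary open $G$.

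Fix $\eps \in (0,1)$ and let $R = R(\eps) \ge 1$ be the constant supplied by the hypothesis. The crux is to produce finitely many pairwise disjoint closed coordinate boxes $B_1, \dots, B_m \subseteq W$, each satisfying the thinness condition \eqref{lemma hyp 0} for this $R$, with $\sum_{j=1}^m \vol(B_j)\, \inf_{B_j} f$ within $O(\eps)$ of $\int_W f$. To get there I would first localize away from $\partial U$: since $f$ is a probability density, $\int_{\R^k} f = 1 < \infty$, so by inner regularity there is a compact $K \subseteq W$ with $\int_K f > \int_W f - \eps$. Set $\delta := \min\bigl(d(K,W^c),\, d(K,U^c)\bigr) > 0$; then $f = f_U$ is bounded, and uniformly continuous with some modulus $\om_f$, on the compact neighborhood $K_\delta := \{x : d(x,K) \le \delta/2\} \subseteq W$. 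Now let $Q_1, \dots, Q_m$ be the closed cubes of a side-$h$ grid that meet $K$, with $h$ chosen so small that $h\sqrt k < \delta/(2R)$ and $\vol(K_\delta)\,\om_f(h\sqrt k) < \eps$. Each $Q_j$ lies in $K_\delta \subseteq U$, has $\diam Q_j = h\sqrt k$, and $d(Q_j, U^c) \ge \delta - h\sqrt k \ge \delta/2 > R\,h\sqrt k = R\,\diam Q_j$, so $Q_j$ obeys \eqref{lemma hyp 0}; moreover the $Q_j$ cover $K$, so a one-line oscillation estimate (using $\sup_{Q_j} f - \inf_{Q_j} f \le \om_f(h\sqrt k)$ and that the $Q_j$ overlap only on a null set) gives $\sum_j \vol(Q_j) \inf_{Q_j} f \ge \int_{\bigcup_j Q_j} f - \vol(K_\delta)\om_f(h\sqrt k) \ge \int_K f - \eps \ge \int_W f - 2\eps$. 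Finally, replacing each $Q_j$ by the concentric cube scaled by a factor $1-\eta$ yields pairwise disjoint closed boxes $B_j \subseteq Q_j$ that still satisfy \eqref{lemma hyp 0} (the diameter shrinks and the distance to $U^c$ grows), and for $\eta$ small enough that $(1-\eta)^k \ge 1-\eps$ we retain $\sum_j \vol(B_j)\inf_{B_j} f \ge (1-\eps)\bigl(\int_W f - 2\eps\bigr)$.

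With these boxes the rest is bookkeeping. As the $B_j$ are disjoint subsets of $G$, $\P(X_n \in G) \ge \sum_{j=1}^m \P(X_n \in B_j)$ for every $n$, and since the $\liminf$ of a finite sum dominates the sum of the $\liminf$s, the hypothesis applied to each good box $B_j$ gives
\begin{equation*}
\liminf_n \P(X_n \in G) \;\ge\; \sum_{j=1}^m \liminf_n \P(X_n \in B_j) \;\ge\; (1-\eps)\sum_{j=1}^m \vol(B_j)\inf_{B_j} f \;\ge\; (1-\eps)^2\Bigl(\int_W f - 2\eps\Bigr).
\end{equation*}
Letting $\eps \downarrow 0$ (and using $\int_W f \le 1$) yields $\liminf_n \P(X_n \in G) \ge \int_W f = \P(X \in G)$, and Portmanteau finishes the proof.

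The one genuinely delicate ingredient is the box construction. The thinness condition \eqref{lemma hyp 0} forces the boxes near $\partial U$ to be small, so a single uniform grid over all of $W$ need not work; the fix is that after restricting to a compact $K$ bounded away from $U^c$, thinness becomes automatic for \emph{every} sufficiently fine grid, and the boundedness and uniform continuity of $f_U$ on a compact neighborhood of $K$ then let a routine Riemann-type estimate recover almost all of the mass. The remaining ingredients---inner regularity to control the possibly unbounded behavior of $f$ near $\partial U$, the mild rescaling to make the boxes disjoint, and the Portmanteau bookkeeping---are all standard.
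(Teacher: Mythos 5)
Your argument is correct: the localization to a compact $K\subseteq G\cap U$ via inner regularity, the fine grid whose cubes automatically satisfy \eqref{lemma hyp 0} once $h\sqrt k<\delta/(2R)$, the uniform-continuity oscillation bound, the concentric shrinking to force disjointness, and the final Portmanteau/liminf bookkeeping all check out (the only unremarked point, that one may take $R\ge 1$ without loss, is immediate since enlarging $R$ only shrinks the class of boxes to which the hypothesis must apply). Note that the paper itself gives no proof of this proposition --- it is quoted verbatim as Proposition 2.1 of \cite{AK} --- so there is nothing internal to compare against; your covering argument is the natural one for a statement of this type and matches what the cited source does.
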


\section{Convergence to PD($\theta$)}

\subsection{Preliminaries}

 Our convergence criterion is most conveniently cast in the language
of random multisubsets\footnote{ By saying $A$ is a multisubset of
$U$, we mean that $U$ is a universal set, and for each $u \in U$, the
multiplicity $m_u=m_u(A)$ of $u$ as an element of $A$ is a nonnegative
integer.  There is of course an alternate reading of the phrase, with
``$A$ is a multisubset of $B$'' to mean that both $A,B$ are multisets,
and for each $u$ in the underlying universal set, $m_u(A) \le
m_u(B)$.} of the interval $(0,1]$.  In the course of the proof we will
also need to consider size-biased permutations.

 We consider only multisets whose multiplicities are all finite.

 Informally, a process that generates random countable (or
finite) multisets $A\subset (0,1]$ has been fully specified provided
that for any finite collection $S_1,\dots,S_k$ of Borel subsets of
$(0,1]$, the cardinalities $|A \cap S_1|,\dots,|A\cap S_k|$, including
multiplicities, have well-defined joint probability
distributions.\footnote{ This induces a probability measure on the
space whose points are countable subsets of $(0,1]$. For further
information, including the identification of random multisets with
random $\sigma$-finite integer-valued measures on the ambient space,
see, e.g., \cite{kallenberg}, Chapter 12.} Though infinite
cardinalities may occur, for any singleton $\{ x \} \subset (0,1]$ we
must have $|A \cap \{x \} | < \infty$.  The joint distributions must
obey any constraints implied by set inclusions.

 Given an at most countable  fixed multiset $A$ of numbers in $(0,1]$ 
(or, indeed, lying anywhere in the positive reals)
with finite sum $t$
(where each summand is included according to its multiplicity), a \emph{size-biased permutation} is an ordered list generated by the following
process: The first element selected $\sigma_1$ equals $s$ with probability proportional to 
$m_ss$ where $m_s$ is the multiplicity of $s$ in $A$; explicitly,
$\p(\sigma_1=s)=m_s s/t$. Thereafter, 
conditional on selections already made, for
any element $s$ remaining in $A$ the next element selected is $s$ with probability proportional 
to $m_s's$, where $m_s'$ is the multiplicity of $s$ among those 
elements yet remaining to be selected. If $|A|<\infty$, we explicitly
set $\sigma_k=0$ for all $k > |A|$. (For multisets the count $|A|$ includes multiplicities.) 

 We may also take size-biased permutations of random multisets,
with sum $T<\infty$: The probability $P(\sigma_1 = s_1,\dots, \sigma_k
= s_k)$, say, that the first $k$ selections are $s_1,\dots,s_k,$ is
calculated by first conditioning on the random multiset $A$ and
calculating $P(\sigma_1 = s_1,\dots, \sigma_k = s_k|A)$ recursively,
as above, and then taking the expectation as $A$
varies.\footnote{ The probability distribution of $P(\sigma_1 =
s_1,\dots, \sigma_k = s_k|A)$ is itself determined by the joint
distributions of cardinalities of intersections, together with the sum
$T$: The occurrence or non-occurrence of the event \{$P(\sigma_1 =
s_1,\dots, \sigma_k = s_k|A) < x\}$ is determined by the
multiplicities of $s_1,\dots,s_k$ and the sum of all remaining
elements, taken with multiplicities; so the probability of that event
is a function of the joint distribution of those quantities, i.e., the
joint distribution of the intersection cardinalities $|A \cap
\{s_1\}|, \dots,|A \cap \{s_k\}|$, and $T$.  So the expectation of
$P(\sigma_1 = s_1,\dots, \sigma_k = s_k|A)$ is taken, by definition,
with respect to this latter joint distribution.}

\subsection{The Main Lemma, $\theta =1$}\label{sectheta1}

 Since the special case $\theta =1$ is a bit less complicated than the
general case, yet already suffices for the classical version of
Billingsley's result as well as for our first extension, we state and
prove the result for this case first.

 Given an arbitrary sequence $A_n$ of random multisubsets $A_n$ of
$(0,1]$, for each $n$ define $L(n)$ to be the sequence of elements of
$A_n$, including multiple occurrences, ranked by decreasing size, and
padded with an infinite string of $0$'s if $A_n$ is finite. That is,
we let $L(n)=(L_1(n),L_2(n),\dots)$ where $L_i(n) := $ the $i^{\rm
th}$ largest element of $A_n$ if $i \le |A_n|$, and $L_i(n) := 0$ if
$i > |A_n|$.  Also, define $ T_n := \sum_{a \in A_n} a$, where the
defining sum is taken with multiplicities.

\begin{lemma}\label{maintheta1}
 Given an arbitrary sequence $A_1, A_2,\dots$ of random multisubsets of
$(0,1]$, with 
associated ranked sequences $L(1),L(2),\dots$ of elements,
assume the following:  first,
\begin{equation}\label{k hyp 2}
 \p( T_n \ \le 1) =1, \end{equation} 
and second, for for any collection of disjoint closed $I_i = [a_i,b_i]
\subset (0,1], \ i=1,\dots,k$ satisfying the hypothesis
\begin{equation}\label{k hyp 1}
b_1+\cdots + b_k < 1,
\end{equation}
we have 
\begin{equation}\label{intensineq}
 \liminf_{n \to \infty}  \e \prod_{i=1}^k  | A_n \cap I_i|  \ \ge
\prod_{i=1}^k \frac{b_i-a_i}{b_i}.
\end{equation}
Then $L(n) \Rightarrow (L_1,L_2,\dots)_{\theta =1}$, the Poisson-Dirichlet
distribution with \mbox{$\theta =1$}.

If in place of \eqref{intensineq} we assume
\begin{equation}\label{logintensineq}
 \liminf_{n \to \infty}  \e \prod_{i=1}^k  | A_n \cap I_i|  \ \ge
\prod_{i=1}^k \log\frac{b_i}{a_i},
\end{equation}
then the same conclusion holds.
\end{lemma}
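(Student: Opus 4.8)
The plan is to deduce the Poisson--Dirichlet limit from Lemma~\ref{joyce}: it suffices to produce, for each $n$, a \gem-candidate process $G(n)$ whose ranked version is $L(n)$, and to show $G(n) \Rightarrow \gem(1)$. The natural choice for $G(n)$ is the size-biased permutation of $A_n$ (padded with $0$'s), since ranking a size-biased permutation recovers the ranked multiset. So the real work is: \emph{show that the size-biased permutation $G(n)=(G_1(n),G_2(n),\dots)$ of $A_n$ converges in distribution to $\gem(1)$.} For this I would apply Proposition~\ref{proposition 1} with $k$ fixed but arbitrary, $X = (G_1,\dots,G_k)$ the first $k$ \gem(1) coordinates (density $f_1$ on the open set $U$ of \eqref{def U G}, which is continuous and strictly positive there), and $X_n = (G_1(n),\dots,G_k(n))$. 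Since weak convergence of all finite-dimensional marginals of a nonnegative-sequence-valued process implies weak convergence of the process, handling each $k$ suffices.

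The heart of the matter is verifying the box hypothesis \eqref{lemma hyp 1} of Proposition~\ref{proposition 1}: for a closed coordinate box $B = \prod_{i=1}^k [a_i,b_i] \subset U$ that is \emph{thin relative to its distance to $U^c$} (condition \eqref{lemma hyp 0}), we must show $\liminf_n \P((G_1(n),\dots,G_k(n)) \in B) \ge (1-\eps)\,\vol(B)\,\inf_B f_1$. I would express the left-hand probability combinatorially: the event $\{G_i(n)\in[a_i,b_i],\ i\le k\}$ says the first $k$ size-biased picks from $A_n$ land in the respective intervals. Writing the size-biased selection probabilities as products of ratios $m\cdot a / (\text{remaining sum})$, and using that $B\subset U$ forces $b_1+\cdots+b_k<1$ so that all the ``remaining sums'' are bounded below by a positive constant on $B$, one can bound $\P(X_n\in B)$ below by a constant (depending on $B$) times the multi-intensity expectation $\e\prod_{i=1}^k |A_n\cap I_i|$ — modulo a correction for the possibility that two of the selected elements coincide or that an element is selected twice. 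Here the thinness condition \eqref{lemma hyp 0} and the exclusion of the subdiagonals from $U$ are exactly what make that correction negligible: on a sufficiently thin box the intervals $[a_i,b_i]$ are pairwise well-separated, so distinct size-biased picks from distinct intervals cannot coincide, and the diagonal terms in the expansion of $\prod|A_n\cap I_i|$ are controlled. Taking $\liminf_n$ and invoking hypothesis \eqref{intensineq}, $\e\prod_i|A_n\cap I_i| \ge \prod_i (b_i-a_i)/b_i + o(1)$, and then matching $\prod_i (b_i - a_i)/b_i = \vol(B)\cdot \prod_i 1/b_i$ against $\vol(B)\inf_B f_1$: since $f_1(x_1,\dots,x_k) = 1/[(1-x_1)(1-x_1-x_2)\cdots(1-x_1-\cdots-x_{k-1})]$ for $\theta=1$, one checks that $\prod_i 1/b_i \ge (1-\delta)\inf_B f_1$ on a thin enough box, because $\inf_B f_1$ is a product of $k$ factors each of the form $1/(1-x_1-\cdots-x_j) < 1/b_{\,\cdot}$-type bounds comparisons — more precisely, one compares the geometric-series telescoping $\prod_i (1-\text{partial sum})^{-1}$ with $\prod_i b_i^{-1}$ and shows the ratio is $1-o(1)$ as $\diam(B)\to 0$ with $B\subset U$. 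Absorbing all the small errors into the single factor $(1-\eps)$ completes the verification.

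The main obstacle I anticipate is this last bookkeeping: cleanly relating the size-biased selection probability over a box to the multi-intensity $\e\prod_i|A_n\cap I_i|$ while rigorously discarding the coincidence/repetition terms and the discrepancy between the ``running remaining sums'' and the fixed interval endpoints $b_i$. The thinness hypothesis \eqref{lemma hyp 0} gives quantitative control — it forces each interval to be short compared to its distance from the boundary of $U$, hence compared to the gaps $b_{i}-b_{i-1}$ and to $1-(b_1+\cdots+b_k)$ — but assembling these estimates into the clean bound $(1-\eps)\vol(B)\inf_B f_1$ requires care; this is presumably where the authors' reorganization relative to Donnelly--Grimmett pays off, by funneling everything into one verification of \eqref{lemma hyp 1}.

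Finally, the alternate hypothesis \eqref{logintensineq} gives the \emph{same} conclusion because $\log(b_i/a_i) = \log(1 + (b_i-a_i)/a_i) \ge (b_i-a_i)/b_i$ is an elementary pointwise inequality (equivalently, $\log(b/a)\ge 1 - a/b$), so \eqref{logintensineq} implies \eqref{intensineq} term by term, and no separate argument is needed; one simply remarks this implication and appeals to the first half of the lemma.
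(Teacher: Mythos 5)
Your architecture is exactly the paper's: take $G(n)$ to be a size-biased permutation of $A_n$, reduce \eqref{logintensineq} to \eqref{intensineq} by the elementary log inequality, apply Proposition~\ref{proposition 1} to the $k$-dimensional marginals with target density $f_1 1_U$, and finish with Lemma~\ref{joyce}. The gaps are in the central estimate. First, the ``coincidence/repetition corrections'' you anticipate do not arise: for a \emph{closed} box $B\subset U$ the excluded subdiagonals already force the intervals $I_1,\dots,I_k$ to be pairwise disjoint (no thinness needed for this), so conditional on $A_n$ and on the first $j$ picks lying in $I_1,\dots,I_j$, \emph{all} of $A_n\cap I_{j+1}$ is still available, and one gets directly
\[
\mathbb{P}\bigl(G_{j+1}(n)\in I_{j+1}\mid\cdots\bigr)\;\ge\;\frac{|A_n\cap I_{j+1}|\,a_{j+1}}{T_n-(a_1+\cdots+a_j)}\;\ge\;\frac{|A_n\cap I_{j+1}|\,a_{j+1}}{1-(a_1+\cdots+a_j)} .
\]
The second inequality is where the hypothesis $\mathbb{P}(T_n\le 1)=1$ enters --- you never invoke it, and your phrase ``remaining sums bounded \emph{below} by a positive constant'' points the wrong way: to lower-bound the selection probability you need an \emph{upper} bound on the remaining mass, and that is supplied by $T_n\le1$, not by $b_1+\cdots+b_k<1$. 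Multiplying these conditional bounds and taking expectations gives $\mathbb{P}(X_n\in B)\ge \mathbb{E}\prod_i |A_n\cap I_i|\,a_i/(1-(a_1+\cdots+a_{i-1}))$ with no error terms at all.

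Second, your final matching step does not close. After invoking \eqref{intensineq} the lower bound is $\prod_i\frac{a_i}{1-(a_1+\cdots+a_{i-1})}\cdot\prod_i\frac{b_i-a_i}{b_i}$, and the correct factorization is
\[
\prod_{i=1}^k\frac{a_i}{b_i}\;\cdot\;\mathrm{vol}(B)\;\cdot\;\prod_{i=1}^k\frac{1}{1-(a_1+\cdots+a_{i-1})}
\;=\;\prod_{i=1}^k\frac{a_i}{b_i}\;\cdot\;\mathrm{vol}(B)\;\cdot\;f_1(a_1,\dots,a_k)\;\ge\;\prod_{i=1}^k\frac{a_i}{b_i}\,\mathrm{vol}(B)\,\inf_B f_1,
\]
since the corner $(a_1,\dots,a_k)$ lies in $B$. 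You instead pair $\prod_i 1/b_i$ against $\inf_B f_1$ and silently drop the size-biasing prefactor $\prod_i a_i/(1-(a_1+\cdots+a_{i-1}))$, which is not close to $1$ (for $k=1$ it equals $a_1$), so the chain as you wrote it does not produce $(1-\eps)\,\mathrm{vol}(B)\,\inf_B f_1$. With the factorization done as above, the \emph{only} job of the thinness condition \eqref{lemma hyp 0} is to force $\prod_i a_i/b_i\ge 1-\eps$: since $d(B,U^c)\le a_i$, \eqref{lemma hyp 0} gives $b_i-a_i<a_i/(R\sqrt{k})$, and $R=\bigl[\sqrt{k}\bigl((1-\eps)^{-1/k}-1\bigr)\bigr]^{-1}$ suffices --- it has nothing to do with separating the intervals or with the gap $1-(b_1+\cdots+b_k)$ in the $\theta=1$ case.
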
 
\begin{proof}

Since for $b \ge a >0$ we have $\log(b/a) \ge (b-a)/a$, it suffices to consider only 
\eqref{intensineq}. 
Taking our cue from Donnelly and Grimmett \cite{DonnellyGrimmett}, for each $n$ define a process, 
$$
G(n) = (G_1(n),G_2(n), \dots),
$$
whose components are the successive elements of a size biased permutation of $A_n$, padded with 
zeros if $A_n$ is finite.
We will use Lemma~\ref{proposition 1} with $X$ equal to the first $k$ coordinates of the \gem(1),
in conjunction with \eqref{fdds G} and \eqref{def U G}, 
to show that as $n \to \infty$, the first $k$ coordinates of $G(n)$ converge in distribution to
the first $k$ coordinates of a \gem($1$), for each $k$. Since this implies that $G(n)$ converges
to a \gem($1$), we will then conclude by Lemma~\ref{joyce} that $L(n)$, the ranked 
version of $G(n)$, converges to a PD($1$).

 So let $B=\prod_{i=1}^k [a_i,b_i]$ be a coordinate box whose component intervals satisfy 
our hypotheses. Conditional on $A_n,$ we see that
$$
\P(G_1(n) \in I_1) = \frac{\sum_{a \in A_n \cap I_1}a}{T_n}   \ge |A_n \cap I_1|a_1.
$$

 Conditional also on the first $j$ selections lying in $I_1,\dots,I_j$,
respectively, since their sum must be at least $a_1+ \cdots +a_j,$ the
conditional probability that
$$
G_{j+1} \in I_{j+1}
$$ 
is at least 
$$
\frac{| A_n \cap I_{j+1}|a_{j+1}}{T_n-(a_1+ \cdots + a_j)} \ge 
\frac{| A_n \cap I_{j+1}|a_{j+1}}{1-(a_1+ \cdots + a_j)},
$$
where we have used the disjointness of the intervals to infer that all of $A_n \cap I_{j+1}$
remains available.

 Hence, we find that 
\begin{equation}\label{bias and intensity}
\p ( (G_1(n),\dots,G_k(n)) \in B) \ge 
\end{equation}
$$
\e \left(
 \frac{| A_n \cap I_1|a_1}{1} \ \frac{| A_n \cap I_2|a_2}{1-a_1} \cdots 
\frac{| A_n \cap I_k|a_k}{1-(a_1+ \cdots + a_{k-1})} \right).
$$
Combining hypothesis \eqref{intensineq} with the fact that
$\vol B = \prod (b_i-a_i)$, and using formula \eqref{fdds G} with $\theta =1$,
we see that
$$
\liminf_{n \to \infty} \p ( (G_1(n),\dots,G_k(n)) \in B) \ge 
\prod_{i=1}^k \frac{a_i}{b_i} \ \ \vol(B) \  f_1(a_1,\dots,a_k)
$$ 
$$
 \ge 
\prod_{i=1}^k \frac{a_i}{b_i} \ \ \vol(B) \ \inf_B f_1.
$$ 

 To apply Proposition \ref{proposition 1}, given $\epsilon >0$ it
will suffice to find $R$ large enough so that for all coordinate boxes
satisfying \eqref{lemma hyp 0}, we have $\prod_{i=1}^k \frac{a_i}{b_i}
\ge 1- \epsilon$.  Then \eqref{intensineq} will imply \eqref{lemma hyp
1}. Note that while nothing in the statement of Lemma \ref{proposition
1} explicitly allows us to restrict attention to boxes whose defining
intervals are disjoint, as required for the invocation of
\eqref{intensineq}, our crafty choice of domain $U$ in \eqref{def U G}
makes that automatic. Any closed box lying in $U$ also satisfies
$b_1+\dots+b_k <1$, the other requirement.

 Without harm we may restrict to $\epsilon <1$. Since given any $R>0$, a box $B$ 
satisfying \eqref{lemma hyp 0} satisfies  
\begin{equation}\label{lemmaineq}
b_i-a_i \le \frac{\diam(B)}{\sqrt{k}} < \frac{1}{R \sqrt{k}} \, d(B,U^c)
\le \frac{1}{R \sqrt{k}} \,  a_i,
\end{equation}
 it suffices to take $$R =
 \left[\sqrt{k}\left((1-\eps)^{-1/k}-1\right)\right]^{-1},$$ to get
 $a_i/b_i 
>(1-\eps)^{1/k}$ and hence $\prod a_i/b_i > (1-\eps)$. 

 With this choice of $R$ we have satisfied \eqref{lemma hyp 1}, so
Lemma~\ref{proposition 1} applies; and by the discussion beginning the
proof we are done.
\end{proof}

\subsection{Characterization of PD(1) via Multi-intensity}
\label{sect multi 1}

 Lemma~\ref{maintheta1} above gives a sufficient condition for
convergence to the Poisson-Dirichlet distribution, with parameter
$\theta=1$.  We now explain how this gives a new characterization of
the PD distribution.

 A standard concept for point processes is the \emph{intensity
measure}; in our setup with a random multiset set $A \subset (0,1]$
this is the deterministic measure $\nu$ on the Borel subsets, defined
by $\nu(S)=\e |A \cap S|$.  A standard result in measure theory, the
$\pi-\lambda$ theorem, implies that $\nu$ is determined by its values
on closed intervals, $\nu(I)$ for $I=[a,b]$, $0 <a <b \le 1$.  At this
level, both the Poisson-Dirichlet process PD(1) and the scale
invariant Poisson process with intensity $dx/x$ on $(0,1]$, have the
same intensity with $\nu([a,b]) = \int_a^b dx/x = \ln b/a$.

 Second-order intensity has been considered, for example in
\cite{Brillinger76}.  It is natural to generalize, and define
\emph{multi-intensity} or $k$-fold intensity for $k=1,2,\dots$, by
taking arbitrary choices of $k$ disjoint closed intervals $I_i =
[a_i,b_i]$, setting $B = \prod_{i=1}^k [a_i,b_i]$, and defining
$$
\mu(B) = \e |A \cap I_1| \dots |A \cap I_k|.
$$
In case there is a function $f_k$ on $(0,1]^k \setminus 
\cup_{1 \le i < j \le k} \{x_i=x_j\}$, such that $\mu(B) =
\int_B f_k(x_1,\dots,x_k) \ dx_1 \cdots dx_k$, we say that the random
set $A$ has multi-intensity \emph{density} $f_k(x_1,\dots,x_k)$ 
at $(x_1,\dots,x_k)$. For example, any Poisson process with intensity
$f(x)\, dx$
has multi-intensity density $f_k(x_1,\dots,x_k)
 = f(x_1)
 \times \cdots \times f(x_k) $.  In particular, the
scale invariant Poisson process with intensity $dx/x$ on $(0,1]$ has
multi-intensity density
\begin{equation}\label{PP 1 multi intensity}
{\rm PP\!:\ } f_k(x_1,\dots,x_k) = \frac{1}{x_1 \cdots x_k},
\end{equation}
for all choices of distinct $x_1,x_2,\dots,x_k \in (0,1]$.

 The multi-intensity for the Poisson-Dirichlet is easily derived from
I) in Section~\ref{intro1}, the characterization of PD as PP
conditional on $T=1$, where $T$ is the sum of all the points of the
Poisson process with intensity $dx/x$ on $(0,1]$.  A special
  simplification arises from the property
that the density $p(t)$ for $T$, given explicitly by $p(t)=
e^{-\gamma} \rho(t)$ where $\rho(\cdot)$ is Dickman's function,  satisfies $p(u)/p(1)=1$ for all $u \in (0,1]$.
For distinct $x_1,\dots,x_k \in (0,1]$ with $t := x_1+\cdots+x_k$,
by conditioning the Poisson process on having $T=1$ 
we have for the PD
\begin{equation}\label{PD 1 multi intensity}
{\rm PD\!:\ }
f_k(x_1,\dots,x_k)  = \frac{1}{x_1 \cdots x_k} \ \frac{p(1-t)}{p(1)} 
 = \left\{
\begin{array}{ccc}
    \frac{1}{x_1 \cdots x_k}& \mbox{ if } & t<1 \\
    0                       & \mbox{ if } & t>1
\end{array}
\right.   .
\end{equation}

 To summarize, by comparing \eqref{PP 1 multi intensity} with
\eqref{PD 1 multi intensity}, we see that for $k \ge 2$, the Poisson process and the
Poisson-Dirichlet don't have the same multi-intensity densities, but
their densities agree, when restricted to $(x_1,\dots,x_k)$ with  $t
:= x_1+\cdots+x_k < 1$.

\begin{corollary}\label{cor PD 1}
 View the Poisson-Dirichlet process $(X_1,X_2,\dots)$ as the random
multisubset of $(0,1]$ given by $A = \{X_1,X_2,\dots \}$\footnote{The
condition $T < \infty$ implies that the multiset $A$ can be
reconstructed from the sequence $(X_1,X_2,\dots)$.}. Then the PD is
the unique random $A$ for which both
$$ 
   T := \sum_{x \in A} x\   \mbox{has }  \p(T \le 1 ) = 1,
$$ where the sum is taken with multiplicities, and for each
$k=1,2,\dots$, the multi-intensity density of $A$ is given by the
right side of \eqref{PD 1 multi intensity}.
\end{corollary}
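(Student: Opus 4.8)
The plan is to establish the two directions of the ``unique random $A$'' claim separately. For the \emph{existence} direction---that $PD(1)$, viewed as the random multiset $A=\{X_1,X_2,\dots\}$, has both listed properties---I would simply invoke the discussion preceding the corollary: characterization I) of Section~\ref{intro1} presents $PD(1)$ as the scale invariant Poisson process conditioned on $T=1$, so $\P(T=1)=1$ and a fortiori $\P(T\le1)=1$; and the Palm/factorial-moment computation summarized in \eqref{PP 1 multi intensity}--\eqref{PD 1 multi intensity}, together with the identity $p(u)/p(1)\equiv1$ on $(0,1]$, yields the stated multi-intensity density. The new content is therefore the \emph{uniqueness} direction, where Lemma~\ref{maintheta1} does essentially all the work.

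For uniqueness, let $A$ be any random multisubset of $(0,1]$ with $\P(T\le1)=1$---hence $T<\infty$ almost surely---and with $k$-fold multi-intensity density equal to the right side of \eqref{PD 1 multi intensity} for every $k\ge1$. I would apply Lemma~\ref{maintheta1} to the constant sequence $A_n\equiv A$. Hypothesis \eqref{k hyp 2} holds by assumption. For hypothesis \eqref{logintensineq}, fix disjoint closed intervals $I_i=[a_i,b_i]\subset(0,1]$ with $b_1+\cdots+b_k<1$ and set $B=\prod_{i=1}^k[a_i,b_i]$; every point of $B$ has coordinate sum at most $b_1+\cdots+b_k<1$, so on $B$ the multi-intensity density coincides exactly with $1/(x_1\cdots x_k)$, whence
$$
\E\prod_{i=1}^k|A\cap I_i| \;=\; \int_B\frac{dx_1\cdots dx_k}{x_1\cdots x_k} \;=\; \prod_{i=1}^k\log\frac{b_i}{a_i},
$$
and the $\liminf$ in \eqref{logintensineq}, being the limit of a constant, equals this product. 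Lemma~\ref{maintheta1} then gives $L(n)\Rightarrow PD(1)$; but $L(n)$ is the constant sequence whose value is the ranked rearrangement $L$ of $A$, and weak convergence of a constant sequence forces $L$ to have the $PD(1)$ law exactly. Since $T<\infty$ almost surely, the map of the corollary's footnote recovers $A$ from $L$, and the same map carries a $PD(1)$ sequence to the $PD(1)$ multiset; hence $A$ and the $PD(1)$ multiset are equal in distribution, which is the uniqueness assertion.

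I expect the main obstacle to lie not in uniqueness---which, by the design of Lemma~\ref{maintheta1}, is nearly immediate once one notices the constant-sequence device---but in making the existence direction fully rigorous: confirming that $PD(1)$ has the multi-intensity density \eqref{PD 1 multi intensity} \emph{exactly} off the excluded diagonals, rather than merely up to a Lebesgue-null set. That step requires conditioning a Poisson process honestly on the null event $\{T=1\}$, via the Mecke-type formula for the $k$-th factorial moment measure of the scale invariant Poisson process and the special feature $p(u)/p(1)\equiv1$ on $(0,1]$; once those are in hand the passage \eqref{PP 1 multi intensity}--\eqref{PD 1 multi intensity} is routine. A minor point in either direction is the dictionary between the multiset $A$ and its ranked sequence $L$, which rests on the reconstruction remark and is valid precisely because $T<\infty$.
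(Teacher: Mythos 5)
Your proposal is correct and matches the paper's proof, which likewise handles existence by citing the preceding derivation of \eqref{PD 1 multi intensity} and handles uniqueness by applying Lemma~\ref{maintheta1} to the constant sequence $A_n = A$. The extra details you supply (verifying \eqref{logintensineq} by integrating the density over the box, and passing between the multiset and its ranked sequence) are correct elaborations of the same argument.
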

\begin{proof}
 If a multiset $A$ satisfies the given hypotheses, then we can
apply
Lemma~\ref{maintheta1} with $A_n=A$,  for each
$n=1,2,\dots$.   Conversely, we have already noted that starting with
the PD, we have $1=\p(T=1)$, and multi-intensity density as given by  
\eqref{PD 1 multi intensity}.
\end{proof}

\subsection{The Main Lemma, Arbitrary $\theta > 0$}\label{sectheta}

 We keep the notation of Section~\ref{sectheta1}.
\begin{lemma}\label{maintheta}
 Let $\theta >0$.  Given an arbitrary sequence $A_1, A_2,\dots $
of random multisubsets of $(0,1]$, with the associated ranked
sequences $L(1),L(2),\dots$ of elements, make the following
assumption: Suppose that for some $-\infty < \alpha, \beta < \infty$ with $\alpha + \beta
=1 - \theta,$ it is the case that for any collection of disjoint
closed $I_i = [a_i,b_i] \subset (0,1], \ i=1,\dots,k$ satisfying the
hypothesis
\begin{equation}\label{theta k hyp 1}
b_1+\cdots + b_k < 1,
\end{equation}
we have both
\begin{multline}\label{theta intensineq}
 \liminf_{n \to \infty}  \e \prod_{i=1}^k  | A_n \cap I_i|  \ \ge\\
\frac{\theta^k}{(1-a_1-\cdots-a_k)^{\alpha}(1-b_1-\cdots-b_k)^{\beta}}\prod_{i=1}^k \frac{b_i-a_i}{b_i}
\end{multline}
and
\begin{equation}\label{theta k hyp 2}
 \p( T_n \ \le 1) =1. 
\end{equation}
Then
 $L(n) \Rightarrow (L_1,L_2,\dots)_{\theta}$, the Poisson-Dirichlet
process with \mbox{parameter $\theta$.}

If in place of \eqref{theta intensineq} we assume
\begin{multline}\label{logtheta intensineq}
 \liminf_{n \to \infty}  \e \prod_{i=1}^k  | A_n \cap I_i|  \ \ge\\
\frac{\theta^k}{(1-a_1-\cdots-a_k)^{\alpha}(1-b_1-\cdots-b_k)^{\beta}}\prod_{i=1}^k \log\frac{b_i}{a_i},
\end{multline}
then the same conclusion holds.
\end{lemma}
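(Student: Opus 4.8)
The plan is to run the proof of Lemma~\ref{maintheta1} essentially verbatim, with the \gem($1$) density $f_1$ replaced by the \gem($\theta$) density $f_\theta$ of \eqref{fdds G} and with \eqref{theta intensineq} fed in wherever \eqref{intensineq} was used. First, since $\log(b/a)\ge (b-a)/b$ for $0<a\le b$, hypothesis \eqref{logtheta intensineq} implies \eqref{theta intensineq} termwise (with the same $\alpha,\beta$-factor on each side), so it suffices to argue under \eqref{theta intensineq}. As before, for each $n$ let $G(n)=(G_1(n),G_2(n),\dots)$ record a size-biased permutation of $A_n$, padded with zeros; I will invoke Proposition~\ref{proposition 1} with $X$ the first $k$ coordinates of a \gem($\theta$), whose density on $\mathbb{R}^k$ is $f_\theta\,1_U$ with $f_\theta$ as in \eqref{fdds G} and $U$ as in \eqref{def U G}, to show that the first $k$ coordinates of $G(n)$ converge in distribution to those of a \gem($\theta$) for each $k$; this gives that $G(n)$ converges in distribution to \gem($\theta$), whence $L(n)\Rightarrow PD(\theta)$ by Lemma~\ref{joyce}. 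Exactly as in the $\theta=1$ case, the exclusion of the subdiagonals in $U$ forces the component intervals $I_i=[a_i,b_i]$ of any closed box $B=\prod_{i=1}^k[a_i,b_i]\subset U$ to be pairwise disjoint, while $B\subset U$ also gives $b_1+\cdots+b_k<1$, so \eqref{theta intensineq} applies to any such $B$.

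The size-biasing step is word-for-word the one in Section~\ref{sectheta1} and involves no $\theta$: conditioning on $A_n$, then successively on the first $j$ selections lying in $I_1,\dots,I_j$, and using $T_n\le 1$ and the disjointness of the $I_i$, one obtains
$$
\P\bigl((G_1(n),\dots,G_k(n))\in B\bigr)\ \ge\ \Bigl(\,\prod_{i=1}^k\frac{a_i}{1-(a_1+\cdots+a_{i-1})}\Bigr)\ \E\prod_{i=1}^k|A_n\cap I_i|.
$$
Taking $\liminf_n$, inserting \eqref{theta intensineq}, and using $\vol(B)=\prod_i(b_i-a_i)$, we get
$$
\liminf_n\P\bigl((G_1(n),\dots,G_k(n))\in B\bigr)\ \ge\ \Bigl(\,\prod_{i=1}^k\frac{a_i}{b_i}\Bigr)\frac{\theta^k}{(1-a_1-\cdots-a_k)^{\alpha}(1-b_1-\cdots-b_k)^{\beta}\prod_{i=1}^k\bigl(1-(a_1+\cdots+a_{i-1})\bigr)}\,\vol(B).
$$
By \eqref{fdds G}, the fraction appearing here equals $f_\theta(a_1,\dots,a_k)$ multiplied by $(1-a_1-\cdots-a_k)^{-\alpha-(\theta-1)}(1-b_1-\cdots-b_k)^{-\beta}$, whose two exponents sum to $0$ since $\alpha+\beta=1-\theta$; so the displayed lower bound is $\bigl(\prod_i a_i/b_i\bigr)(1-a_1-\cdots-a_k)^{-\alpha-(\theta-1)}(1-b_1-\cdots-b_k)^{-\beta}f_\theta(a_1,\dots,a_k)\,\vol(B)$.

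It remains, as in the $\theta=1$ case, to produce for each $\eps>0$ an $R<\infty$ such that every closed coordinate box $B\subset U$ with $\diam(B)<d(B,U^c)/R$ satisfies
$$
\Bigl(\,\prod_{i=1}^k\frac{a_i}{b_i}\Bigr)(1-a_1-\cdots-a_k)^{-\alpha-(\theta-1)}(1-b_1-\cdots-b_k)^{-\beta}f_\theta(a_1,\dots,a_k)\ \ge\ (1-\eps)\,\inf_B f_\theta ,
$$
for then \eqref{lemma hyp 1} holds and Proposition~\ref{proposition 1} applies. I would bound the ratio of the left side to $\inf_B f_\theta$ from below by a product of three factors, each tending to $1$ uniformly over such boxes as $R\to\infty$: (i) $\prod_i a_i/b_i$, controlled exactly as in the $\theta=1$ case via $b_i-a_i\le\diam(B)<R^{-1}d(B,U^c)\le R^{-1}a_i$ (compare \eqref{lemmaineq}); (ii) $f_\theta(a_1,\dots,a_k)/\inf_B f_\theta$, which tends to $1$ because each quantity $1-(x_1+\cdots+x_{i-1})$ and $1-x_1-\cdots-x_k$ occurring in \eqref{fdds G} is bounded below on $B$ by a fixed multiple of $d(B,U^c)$ while varying across $B$ by at most a fixed multiple of $\diam(B)$, so that $f_\theta$ is pinched between two values whose ratio tends to $1$; and (iii) the ``$\alpha$--$\beta$ correction'' $(1-a_1-\cdots-a_k)^{-\alpha-(\theta-1)}(1-b_1-\cdots-b_k)^{-\beta}$, which --- since $-\alpha-(\theta-1)-\beta=0$ --- equals $\bigl((1-a_1-\cdots-a_k)/(1-b_1-\cdots-b_k)\bigr)^{-\alpha-(\theta-1)}$, a fixed power of a ratio lying in $[1,1+1/R]$ for every such box.

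The main obstacle, and the only genuinely new point beyond the $\theta=1$ argument, is factor (iii): when $\theta\ne1$ the \gem density carries the nonconstant factor $(1-x_1-\cdots-x_k)^{\theta-1}$, which the size-biasing chain does not itself produce, and the admissible exponents $\alpha,\beta$ have indefinite sign, so one cannot bound $(1-a_1-\cdots-a_k)^{-\alpha}(1-b_1-\cdots-b_k)^{-\beta}$ below by $(1-x_1-\cdots-x_k)^{\theta-1}$ on $B$ by any monotonicity in the $x_i$. The resolution is that no exact domination is needed --- only agreement up to a factor $1-\eps$ --- and the single constraint $\alpha+\beta=1-\theta$ makes $(1-a_1-\cdots-a_k)^{-\alpha}(1-b_1-\cdots-b_k)^{-\beta}$ and $(1-x_1-\cdots-x_k)^{\theta-1}$ powers, with the same total exponent $\theta-1$, of quantities that are all mutually within a factor $1+1/R$ on a box obeying $\diam(B)<d(B,U^c)/R$; hence they are interchangeable up to a factor $1-\eps$ once $R$ is large. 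With such an $R$ in hand, \eqref{lemma hyp 1} holds for all admissible boxes, Proposition~\ref{proposition 1} gives convergence of the first $k$ coordinates of $G(n)$ for every $k$, and the proof concludes via Lemma~\ref{joyce} as sketched in the first paragraph.
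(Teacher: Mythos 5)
Your proposal is correct and follows essentially the same route as the paper: the same size-biased permutation bound \eqref{bias and intensity}, the same identification of the leftover correction factor as $\bigl((1-a_1-\cdots-a_k)/(1-b_1-\cdots-b_k)\bigr)^{\beta}$ using $\alpha+\beta=1-\theta$, and the same geometric estimate showing this ratio lies in $[1,1+1/R]$ for boxes satisfying \eqref{lemma hyp 0}, so that a fixed power of it exceeds $1-\eps'$ for large $R$. The only (harmless) deviation is your claim that $f_\theta(a_1,\dots,a_k)/\inf_B f_\theta\to 1$, where the trivial bound $\ge 1$ is all that is needed and all the paper uses.
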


\begin{proof}
 As in the proof of Lemma~\ref{maintheta1}, we appeal to Lemma
\ref{proposition 1}, this time using \eqref{fdds G} with $\theta>0$ to
specify the target limit density. Also, it suffices to consider only
\eqref{theta intensineq} and not \eqref{logtheta intensineq}.
If the coordinates of $G(n) = (G_1(n),G_2(n),\dots)$
are generated as a size-biased permutation of the elements of $A_n$,
padded with zeros if necessary, then \eqref{bias and intensity} 
gives a lower bound on $\p ( (G_1(n),\dots,G_k(n)) \in B)$.  This
combines with hypothesis \eqref{theta intensineq} and formula
\eqref{fdds G} to give
$$
\liminf_{n \to \infty} \p ( (G_1(n),\dots,G_k(n)) \in B) 
$$
$$
\ge \frac{(1-a_1-\cdots-a_k)^{1- \theta}}{(1-a_1-\cdots-a_k)^{\alpha}(1-b_1-\cdots-b_k)^{\beta}}
\prod_{i=1}^k \frac{a_i}{b_i} \ \ \vol(B) \ f_{\theta}(a_1,\dots,a_k)
$$
$$
 =  \left(\frac{1-b_1-\cdots-b_k}{1-a_1-\cdots-a_k}\right)^{-\beta}
\prod_{i=1}^k \frac{a_i}{b_i} \ \ \vol(B) \ f_{\theta}(a_1,\dots,a_k)
$$
$$
 \ge \left(\frac{1-b_1-\cdots-b_k}{1-a_1-\cdots-a_k}\right)^{-\beta}
\prod_{i=1}^k \frac{a_i}{b_i} \ \ \vol(B) \ \inf_B f_{\theta}
$$
$$
\ge \left(\frac{1-b_1-\cdots-b_k}{1-a_1-\cdots-a_k}\right)^{\gamma}
\prod_{i=1}^k \frac{a_i}{b_i} \ \ \vol(B)  \ \inf_B f_{\theta}
$$
where we have written $\gamma$ for $\max(0,-\beta)$.

We now show the preliminary factors can be replaced with $1-\epsilon$:
Given $1 > \epsilon' >0$, 
setting $$R = R_1 =
\left[\sqrt{k}\left((1-\eps')^{-1/k}-1\right)\right]^{-1}$$ will serve, via
\eqref{lemmaineq} 
for boxes complying
with \eqref{lemma hyp 0}, to ensure
that
$$ 
\prod_{i=1}^k \frac{a_i}{b_i} \ge 1- \eps'.
$$ 
As for bounding
$$ 
\left(\frac{1-b_1-\cdots-b_k}{1-a_1-\cdots-a_k}\right)^{\gamma}
$$
when  $\gamma >0$,
complying with \eqref{lemma hyp 0} for a given $R$ also means
$$
b_i-a_i \le \frac{\diam(B)}{\sqrt{k}} < \frac{1}{R \sqrt{k}} d(B,U^c)
\le \frac{1-b_1-\cdots-b_k}{kR}
$$
for each $i$,
where in the rightmost member we have measured distance from the hyperplane $x_1 + \cdots +x_k =1$.
Since $(1- a_1-\cdots-a_k) - (1-b_1-\cdots-b_k) = (b_1 - a_1) + \cdots +(b_k - a_k)$
we get 
$$
(1- a_1-\cdots-a_k) - (1-b_1-\cdots-b_k) \le  \frac{1-b_1-\cdots-b_k}{R},
$$
and so for $R = R_2$ sufficiently large we have 
$$
\left(\frac{1-b_1-\cdots-b_k}{1-a_1-\cdots-a_k}\right)^{\gamma} \ge 1-\epsilon'
$$
as well.  Thus when $\gamma >0$, given $\epsilon >0$ pick $\epsilon'$ small enough so that  
$(1-\epsilon')^2 \ge 1-\epsilon$, and then choose $R$ to be the larger of $R_1$ and $R_2$.
Proposition \ref{proposition 1} now applies, completing the argument.
\end{proof}

\subsection{Characterization of PD($\theta$) via Multi-intensity}
\label{sect multi theta}

 We now
treat the situation for general $\theta>0$, thereby extending the results of 
Section~\ref{sect multi 1}.  We will be brief and
highlight only the changes.

 The scale invariant Poisson process with intensity $\theta\,
dx/x$ on $(0,1]$ has multi-intensity density
\begin{equation}\label{PP theta multi intensity}
{\rm PP\!:\ } f_k(x_1,\dots,x_k) = \frac{\theta^k}{x_1 \cdots x_k},
\end{equation}
for all choices of distinct $x_1,x_2,\dots,x_k \in (0,1]$.
For any $\theta>0$, the density $p(t)$ for $T$, \emph{restricted to}
(0,1], is given (see for example \cite{abtbook}, formula
  (4.20)) by 
$$
  p(t)= \frac{e^{-\gamma \theta}
 t^{\theta-1}}{\Gamma(\theta)}\  \mbox{ for } 0 < t \le 1.
$$
Hence, by conditioning the Poisson $\theta \, dx/x$ process on the event $T=1$,
for distinct $x_1,\dots,x_k \in (0,1]$ with $t := x_1+\cdots+x_k$,
we have multi-intensity density
\begin{equation}\label{PD theta multi intensity}
{\rm PD\!:\ }
f_k(x_1,\dots,x_k)  = \frac{\theta^k}{x_1 \cdots x_k} \ \frac{p(1-t)}{p(1)} 
 = \left\{
\begin{array}{ccc}
    \frac{\theta^k (1-t)^{\theta-1}}{x_1 \cdots x_k}& \mbox{ if } & t<1 \\
    0                       & \mbox{ if } & t>1
\end{array}
\right.   .
\end{equation}

 For $k=1$, this gives the intensity measure of the PD, 
$\nu(dx) = \theta (1-x)^{\theta-1}\, dx/x$ on $(0,1]$, which differs,
when $\theta \ne 1$, from the intensity $\theta \, dx /x$ of the
corresponding Poisson process.

\begin{corollary}\label{cor PD theta}
 Let $\theta >0$.  View the Poisson-Dirichlet process with parameter
$\theta$, $(X_1,X_2,\dots)$, as the random multisubset of $(0,1]$
given by $A = \{X_1,X_2,\dots \}$. Then the PD is the unique random
$A$ for which both
$$ 
   T := \sum_{x \in A} x \ \  \mbox{ has }  \p(T \le 1 ) = 1,
$$ 
where the sum is taken with multiplicities, 
and for each $k=1,2,\dots$, the multi-intensity density of $A$ is
given by the right side of \eqref{PD theta multi intensity}.
\end{corollary}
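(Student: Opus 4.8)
The plan is to mimic the proof of Corollary~\ref{cor PD 1}, replacing the appeal to Lemma~\ref{maintheta1} by an appeal to Lemma~\ref{maintheta}. Suppose $A$ is a random multisubset of $(0,1]$ satisfying the two stated hypotheses: $\p(T \le 1)=1$, and for each $k$ the multi-intensity density of $A$ is $\theta^k(1-t)^{\theta-1}/(x_1\cdots x_k)$ on the region $t := x_1+\cdots+x_k < 1$ (and $0$ for $t>1$), as in the right side of \eqref{PD theta multi intensity}. Take the constant sequence $A_n := A$. Then \eqref{theta k hyp 2} holds trivially, and the $\liminf$ in \eqref{theta intensineq} is an ordinary value; so it remains only to check that for any disjoint closed $I_i=[a_i,b_i]$ with $b_1+\cdots+b_k<1$, setting $B=\prod_{i=1}^k[a_i,b_i]$, one has $\mu(B) = \int_B \theta^k(1-x_1-\cdots-x_k)^{\theta-1}/(x_1\cdots x_k)\,dx \ge \mathrm{vol}(B)\cdot \inf_B(\text{integrand})$, and that this infimum has the form demanded by \eqref{theta intensineq}. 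On $B$ we always have $1/(x_1\cdots x_k)\ge 1/(b_1\cdots b_k)$; and $(1-\sum x_i)^{\theta-1}\ge (1-\sum b_i)^{\theta-1}$ when $\theta\ge1$, whereas $(1-\sum x_i)^{\theta-1}\ge (1-\sum a_i)^{\theta-1}$ when $\theta<1$. Multiplying these, using $\inf(fg)\ge(\inf f)(\inf g)$ for positive factors, and recalling $\mathrm{vol}(B)=\prod(b_i-a_i)$, one obtains exactly the right-hand side of \eqref{theta intensineq} with $(\alpha,\beta)=(0,1-\theta)$ when $\theta\ge1$ and $(\alpha,\beta)=(1-\theta,0)$ when $\theta\le1$; in either case $\alpha+\beta=1-\theta$, as required.

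With the hypotheses of Lemma~\ref{maintheta} verified for this $(\alpha,\beta)$, we conclude that the (constant in $n$) ranked sequence $L(A_n)=L(A)$ converges in distribution to $\mathrm{PD}(\theta)$; a constant sequence converging in distribution to $\mathrm{PD}(\theta)$ means precisely that $L(A)$ has the $\mathrm{PD}(\theta)$ law. Since $\p(T\le1)=1$ forces $T<\infty$ almost surely, the multiset $A$ is recoverable from its ranked sequence (cf.\ the footnote to the corollary), so $A$ itself is distributed as the $\mathrm{PD}(\theta)$ point process. For the converse, the computation preceding the corollary, culminating in \eqref{PD theta multi intensity}, already shows that the $\mathrm{PD}(\theta)$ process realized as $A=\{X_1,X_2,\dots\}$ has $\p(T=1)=1$ and multi-intensity density equal to the right side of \eqref{PD theta multi intensity}. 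Thus the $\mathrm{PD}(\theta)$ both satisfies the hypotheses and, by the first part, is the only random $A$ that does; this gives the asserted uniqueness.

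The one step needing genuine care is the reduction of the integral lower bound to the two-parameter shape of \eqref{theta intensineq}: one must observe that the factors $1/(x_1\cdots x_k)$ and $(1-\sum x_i)^{\theta-1}$ are each individually monotone on $B$, so that $\inf_B$ of their product is bounded below by the product of the separate infima, even when (for $\theta<1$) those two infima are attained at opposite corners of $B$. This is exactly what permits the single degree of freedom in the constraint $\alpha+\beta=1-\theta$ to be assigned entirely to $\beta$ or entirely to $\alpha$ according to the sign of $\theta-1$. All remaining bookkeeping is a verbatim repetition of the argument in Corollary~\ref{cor PD 1}.
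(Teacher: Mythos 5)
Your proposal is correct and follows essentially the same route as the paper, which simply applies Lemma~\ref{maintheta} to the constant sequence $A_n=A$ and cites the computation leading to \eqref{PD theta multi intensity} for the converse. You additionally spell out the detail the paper leaves implicit --- that the density hypothesis yields \eqref{theta intensineq} with $(\alpha,\beta)=(0,1-\theta)$ or $(1-\theta,0)$ according to the sign of $\theta-1$ --- and that verification is accurate.
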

\begin{proof}
 If a multiset $A$ satisfies the given hypotheses, then we can apply\break
Lemma~\ref{maintheta} with $A_n=A$,  for each
$n=1,2,\dots$.   Conversely, we have already noted that starting with
the PD, we have $1=\p(T=1)$, and multi-intensity density as given by  
\eqref{PD theta multi intensity}.
\end{proof}

\section{Classic Billingsley}

 We reprove Billingsley's original theorem, even 
though it becomes a special case of a later result.

\begin{theorem}\label{classicB}
 Given $n>1$, let $p_1 \ge p_2 \ge \dots $ be the prime factors, including multiple factors, 
of a random integer $N$ chosen uniformly from $1$ to $n$; and for $i >0$ let
$$
L_i(n) =  \log p_i /\log n,
$$
where we set $L_i(n) = 0$ if $N=1$ or  
$i$ exceeds the total number of prime
factors, including multiplicities.
Define
$$
L(n) = (L_1(n), L_2(n), \dots).
$$
Then $L(n)$ converges to a PD($1$) as $n \to \infty$. 
Equivalently, for each $k>0$ the $k$-tuple $(L_1(n),\dots,L_k(n))$ converges in distribution to 
the first $k$ coordinates of a PD($1$).
\end{theorem}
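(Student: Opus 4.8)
The plan is to apply Lemma~\ref{maintheta1} directly, with the random multiset $A_n$ taken to be the set of scaled log factors $\{\log_n p_i\}$, equivalently $A_n = \{(\log p)/\log n : p^a \| N\}$ counted with the multiplicity $a$ of each prime $p$ in $N$. There are two things to check: the almost-sure bound $T_n \le 1$, and the multi-intensity lower bound \eqref{logintensineq}. The first is immediate: $T_n = \sum_i \log p_i / \log n = \log N / \log n \le 1$ since $N \le n$. So the whole content is verifying, for disjoint closed intervals $I_i = [a_i, b_i] \subset (0,1]$ with $b_1 + \cdots + b_k < 1$, that
\[
\liminf_{n\to\infty} \E \prod_{i=1}^k |A_n \cap I_i| \ge \prod_{i=1}^k \log\frac{b_i}{a_i}.
\]
Unwinding the definition, $|A_n \cap I_i|$ is the number of prime power divisors $p^j$ of $N$ (counted appropriately — actually the count with multiplicity of primes $p$ with $n^{a_i} \le p \le n^{b_i}$) falling in the scaled window $[a_i, b_i]$.

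The second step is the number-theoretic heart, but it is genuinely short. First I would reduce from "prime factors with multiplicity" to "distinct prime factors": the contribution of prime squares and higher powers to $\E |A_n \cap I_i|$ is $O(1/\log n) \cdot (\text{something bounded})$ or can simply be dropped since we only need a lower bound, so it suffices to bound below the expected product of the \emph{distinct-prime} counts $\pi_i(N) := \#\{p \mid N : n^{a_i} \le p \le n^{b_i}\}$. Expanding the product and using that the $I_i$ are disjoint (so the relevant primes are distinct across different $i$), $\E \prod_i \pi_i(N)$ equals a sum over $k$-tuples $(q_1,\dots,q_k)$ of distinct primes with $q_i \in [n^{a_i}, n^{b_i}]$ of $\P(q_1 \cdots q_k \mid N) = \frac{1}{n}\lfloor n/(q_1\cdots q_k)\rfloor \ge \frac1n(n/(q_1\cdots q_k) - 1)$. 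Since $b_1 + \cdots + b_k < 1$ we have $q_1 \cdots q_k \le n^{b_1 + \cdots + b_k} = o(n)$, so each term is $(1 - o(1))/(q_1 \cdots q_k)$ uniformly, and the whole sum is $(1-o(1)) \prod_{i=1}^k \sum_{n^{a_i} \le q \le n^{b_i}} \frac1q$ (the distinctness constraint across blocks is automatic from disjointness, and within a block it is exactly the single sum). Finally, Mertens' formula \eqref{Mertensoriginal} gives $\sum_{n^{a_i} \le q \le n^{b_i}} 1/q = \log\log n^{b_i} - \log\log n^{a_i} + o(1) = \log(b_i/a_i) + o(1)$, and taking the product and then the liminf yields exactly \eqref{logintensineq}.

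With both hypotheses of Lemma~\ref{maintheta1} verified, the lemma delivers $L(n) \Rightarrow \mathrm{PD}(1)$, which is the first assertion; the "equivalently" clause is just the standard fact (implicit in the definition of convergence in distribution for the product topology on sequence space) that convergence of the whole ranked sequence is equivalent to convergence of every finite-dimensional marginal, together with Lemma~\ref{joyce}'s framework. I expect the only mild subtlety — not really an obstacle — to be the bookkeeping in passing from counts-with-multiplicity to distinct-prime counts and in justifying that the cross terms when expanding $\prod_i \pi_i(N)$ factor cleanly; the disjointness hypothesis on the $I_i$ is precisely what makes this work, and everything else is a direct appeal to Mertens. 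No delicate error-term analysis is needed because Lemma~\ref{maintheta1} asks only for a $\liminf$ lower bound, not an asymptotic equality.
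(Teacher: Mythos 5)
Your proposal is correct and follows essentially the same route as the paper's own proof: pass to distinct prime factors (the paper's $A_n^{\circ}$) since only a lower bound is needed, expand $\prod_i |A_n^{\circ}\cap I_i|$ via disjointness into a sum of divisibility indicators, use $q_1\cdots q_k\le n^{b_1+\cdots+b_k}=o(n)$ to control the error in $\p(q_1\cdots q_k\mid N)$, and finish with Mertens' formula and Lemma~\ref{maintheta1}. No substantive differences.
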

\begin{proof}
 Define the multiset $A_n$ to contain the non-zero
coordinate entries of $L(n)$, and let $A_n^{\circ}$ be the set underlying $A_n$, i.e.,
with no multiple copies. We want to use Lemma~\ref{maintheta1}. Since 
$|A_n \cap I | \ge |A_n^{\circ} \cap I |$ for any $I = (a,b] \subset (0,1]$ 
and we want lower bounds, it suffices to 
investigate $A_n^{\circ}$.

 Let $I_1 = [a_1,b_1],\dots,I_k = [a_k,b_k]$ be disjoint subintervals of $(0,1]$, 
with $b_1 + \cdots + b_k <1$. The key step is to see that
\begin{equation}\label{divineq}
 \e \prod_{i=1}^k  | A^{\circ}_n \cap I_i| = \sum \frac{1}{q_1\dots q_k} +o(1)
\end{equation}
where the sum is over all $k$-tuples of primes lying in 
$[n^{a_1},n^{b_1}] \times \cdots \times [n^{a_k},n^{b_k}]$.

 To verify \eqref{divineq}, for integers $m,N >0$ let $J_N(m)$ be the
indicator function of the event $m|N$. If $N$ is our random integer we
then have
$$
\prod_{i=1}^k  | A^{\circ}_n \cap I_i| = \prod_{i=1}^k \sum_{q_i} J_N(q_i) = 
\sum \prod_{i=1}^k J_N(q_i) = \sum_{q_1,\dots,q_k} J_N(q_1 \cdots q_k)
$$
where each $q_i$ ranges over the primes in $[n^{a_i},n^{b_i}]$, and where we have used
the disjointness of the intervals in the last  step. Note that since $b:=b_1 + \cdots + b_k <1$
and each $q_i \le n^{b_i}$, each product $q_1\cdots q_k$ in the final sum, and hence also the total
number of summands, cannot exceed $n^b =o(n)$.  

 Since $N$ is uniform random in $[1,n]$
we have 
$$
\p (J_N(m) = 1) =  \frac{1}{n}\lfloor n/m \rfloor =\frac{1}{m} + \frac{1}{n} O(1),
$$
uniformly in $m,n$. Then
$$
\e \prod_{i=1}^k  | A^{\circ}_n \cap I_i| = \sum_{q_1,\dots,q_k} \e J_N(q_1 \cdots q_k)
= \sum \frac{1}{q_1\cdots q_k} +\frac{1}{n}\sum O(1)
$$
$$
 =\sum \frac{1}{q_1\cdots q_k} + O(n^b/n),
$$
establishing \eqref{divineq}.

 Putting it all together  then yields
$$
\liminf_{n \to \infty} \e \prod_{i=1}^k  | A_n \cap I_i|  
\ge \liminf_{n \to \infty} \e \prod_{i=1}^k  | A^{\circ}_n \cap I_i|
= \lim_{n \to \infty} \prod_1^k \sum \frac{1}{q_i} = \prod_1^k \log\frac{b_i}{a_i},
$$ where we have used Mertens' formula \eqref{Mertensoriginal} in the
last step.  
This confirms hypothesis \eqref{logintensineq}.  As for
\eqref{k hyp 2}, since for each $N$ we have $p_1 p_2 \dots = N \le n,$
it is always the case that $T_n = L_1(n) + L_2(n) + \cdots \le 1$.
Therefore, Lemma~\ref{maintheta1} applies, completing the proof.
\end{proof}

\section{New Billingsley for Normed Arithmetic\\ Semigroups}

 In this section we extend Billingsley's theorem to the context of
normed arithmetic semigroups satisfying growth conditions general
enough to allow a PD($\theta$) limit with $\theta \ne 1$.

\subsection{Normed Arithmetic Semigroups}\label{sect knop}

 Following the terminology of \cite{knopfmacher}, a {\em normed
arithmetic semigroup} $\cS$ is a commutative semigroup whose only unit
is $1$, admitting unique factorization into prime
elements,\footnote{i.e., free generators} and equipped with a
nonnegative multiplicative norm function $s \mapsto |s|$ for which any
set of elements with bounded norm is finite. It follows at once that
any element $s \ne 1$ must have norm $|s| >1$.

 Certain growth conditions have been studied, with a view towards
providing abstract settings for classical analytic number theory.  For
$x>0$, let $\nu_{\cS}(x)$ be the number of elements $s \in \cS$ with
$|s| \le x$, and let $\pi_{\cS}(x)$ be the number of primes $\pp \in
\cS$ with $|\pp| \le x$.  The asymptotic linear growth condition, that for some
positive constants $A$ and $ \delta$, we have
\begin{equation}\label{lingrowth}
\nu_{\cS}(x)= Ax\left(1 + O\left(\frac{1}{x^{\delta}}\right)\right), 
\end{equation}
has been studied by, e.g., Knopfmacher in \cite{knopfmacher} (as well as by Beurling before him)
who shows, among many other things,  that given \eqref{lingrowth}
we have a generalized Mertens formula (\cite{knopfmacher}, 
Lemma 2.5) 
asserting that for positive $x$,
\begin{equation}\label{newmert}
\sum_{|\pp| \le x} \frac{1}{|\pp|} = \log \log x + B_{\cS} +O\left(\frac{1}{\log x}\right)
\end{equation}
where the constant $B_{\cS}$ depends on the semigroup. He also proves a prime element theorem,
based on Landau's prime ideal theorem, asserting that for $x >0$
we have
\begin{equation}\label{primelt}
\pi_{\cS}(x)\sim x/\log x,
\end{equation}
though we will not need that here.

 Apart from the ordinary positive integers, of course, the semigroup of integral ideals in a number 
field is a standard example, with growth condition \eqref{lingrowth} derived,
e.g., in \cite{Marcus}, Theorem 39; and many other natural examples are given in \cite{knopfmacher}.
For some additional examples of contemporary interest, 
see \cite{liu2,liu}.

 B. M. Bredikhin has studied normed arithmetic semigroups in which $\pi_{\cS}(x)$ satisfies
$\pi_{\cS}(x)\sim \theta x/\log x$ for fixed $\theta \ne 1$ and has shown, in particular, that if
\begin{equation}\label{primgrowth}
\pi_{\cS}(x) = \theta \frac{ x}{\log x}\left(1+ O\left(\frac{1}{ (\log x)^{\epsilon}}\right)\right)
\end{equation}
for some $\epsilon> 0$, then
\begin{equation}\label{bredgrowth}
\nu_{\cS}(x)=\frac{ Ax}{ \log^{1 - \theta}x} \left(1 + O\left(\frac{1}{ (\log \log)^{\epsilon'}}\right)\right)
\end{equation}
for some positive $A$ depending on $\cS$ and where $\epsilon' = \min(1,\epsilon)$.
See \cite{postnikov}, Section~2.5 for a complete account.

A generalized Mertens formula is given for this case as well, 
in passing, on p. 93 of \cite{postnikov}, namely
$$
\sum_{|\pp| \le x} \frac{1}{|\pp|} = \theta \log \log x + O(1),
$$
but we will need the stronger form
\begin{equation}\label{thetamert}
\sum_{|\pp| \le x} \frac{1}{|\pp|} = \theta \log \log x + B_{\cS} + o(1),
\end{equation}
for some constant $B_{\cS}$ depending only on $\cS$. This follows,
however from \eqref{primgrowth} via a standard Stieltjes integral
argument: Write $G(t) = \pi_{\cS}(t) - \theta t/\log t$, so that $G(t)
= O(t/ (\log t)^{1+ \epsilon})$; also $G$ is clearly of bounded
variation.  Let $r > 1$ be less than the minimum norm value of any
prime element.  Then we have
$$
\sum_{|\pp| \le x} \frac{1}{|\pp|} = \int_r^x \frac{1}{t}\, d \pi_{\cS}(t) = 
\theta \int_r^x \frac{1}{t}\, d\left(\frac{t}{\log t}\right) +
\int_r^x \frac{1}{t}\, dG(t). 
$$
The first integral on the right is 
$$
\log \log x - \log \log r + 1/\log x - 1 /\log r.
$$
As for the second integral, knowing that $G(t) = O(F(t))$, where 
$$
F(t) = t/ (\log t)^{1+ \epsilon}
$$
entitles us to write, via formula (4.67) on p. 57 of \cite{knuthgreene},
$$
\int_r^x \frac{1}{t}\, dG(t) = O\left(\frac{1}{r}F(r)\right) + 
O\left(\frac{1}{x}F(x)\right) + 
O\left( \int_r^x \frac{1}{t} \, dF(t)\right),
$$
an integration by parts trick well-known to analytic number theorists, but not often derived
in textbooks. Thus the integral with respect to $dG(t)$ 
converges as $x \to \infty$, and so we may write
$$
\int_r^x \frac{1}{t} \, dG(t) 
= \int_r^{\infty} \frac{1}{t}\, dG(t) - \int_x^{\infty} \frac{1}{t}\, dG(t).
$$
Collecting terms gives us \eqref{thetamert}, with
$$
B_{\cS} = \int_r^{\infty} \frac{1}{t}\, dG(t) - \theta (\log \log r + 1 /\log r).
$$

 Examples of semigroups $\cS$ satisfying Bredikhin's condition include semigroups of positive
integers all of whose prime factors range among a union of disjoint arithmetic sequences with the
same increment;
by Dirichlet theory, the constant $\theta$ is then the sum of the densities of the primes 
from each sequence, amongst all the primes. 

 For an example with $\theta >1$, let $\cS$ be the commutative
multiplicative semigroup freely generated by two disjoint copies of
the usual primes, where the norm of an element is its ordinary
value. In this $\cS,$ two elements of the same norm are distinct
unless the primes in their respective factorizations can be matched by
type. Then $\cS$ satisfies $\eqref{primgrowth}$ with $\theta =2$, with
remainder term as derived from various versions of the usual prime
number theorem.

\subsection{New Billingsley}

 We will state and prove a version of Billingsley's theorem for normed
arithmetic semigroups $\cS$ of the two types discussed in the
preceding subsection. But first we isolate, as lemmas, two pieces of
an argument that we will use more than once.  We write $\Omega(s)$ for
the number of prime factors of $s$, including multiplicities.

\begin{lemma}\label{abstractsetup}
 Let $\cS$ be a normed arithmetic semigroup. Given $n>1$ let $s$
be chosen according to some probability distribution $\p_n$ from the
elements with norm not exceeding $n$.  If $|s| > 1$ let $s =\pp_1
\pp_2 \dots$ be a decomposition into prime factors, with $|\pp_1| \ge
|\pp_2| \ge \dots$.  For each $n$, define a process $L(n) =
(L_1(n),L_2(n),\dots)$ where each $L_i(n) = 0$ if $s=1$ but otherwise
$$
L_i(n) = \frac{\log|\pp_i|}{\log n}
$$
for $i \le \Omega(s)$,
and 
$$
L_i(n) = 0
$$
for $i > \Omega(s)$. 

Let $A_n$ be the multiset of non-zero elements of $L(n)$, and let 
$I_i = [a_i,b_i] \subset (0,1]$, $i = 1,\dots,k$, be disjoint closed intervals.
Then 
$$
\e \prod_{i=1}^k  | A_n \cap I_i| \ge \sum_{\qq_1, \dots, \qq_k}  \p_n((\qq_1 \cdots \qq_k)|s)
$$
where the primes $\qq_1, \dots, \qq_k$ range over 
the respective 
sets $\{\pp:  |\pp| \in [n^{a_i},n^{b_i}]\}$.

\end{lemma}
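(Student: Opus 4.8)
The plan is to mimic the combinatorial identity used in the proof of Theorem~\ref{classicB}, replacing divisibility of ordinary integers by divisibility in the semigroup $\cS$, and then to pass from a count to an expectation by interchanging sum and expectation. Write $A_n^\circ$ for the set underlying the multiset $A_n$, that is, $A_n$ with multiplicities stripped. Since $|A_n \cap I| \ge |A_n^\circ \cap I|$ for every interval $I$, and we only seek a lower bound, it suffices to work with $A_n^\circ$ throughout. For a prime $\qq \in \cS$, note that $\qq \in A_n^\circ \cap I_i$ precisely when $|\qq| \in [n^{a_i}, n^{b_i}]$ and $\qq \mid s$; so $|A_n^\circ \cap I_i| = \sum_{\qq_i} J_s(\qq_i)$, where $J_s(\qq)$ is the indicator of $\qq \mid s$ and $\qq_i$ ranges over the primes with $|\qq_i| \in [n^{a_i},n^{b_i}]$.

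Next I would expand the product. Multiplying out,
$$
\prod_{i=1}^k |A_n^\circ \cap I_i| = \prod_{i=1}^k \sum_{\qq_i} J_s(\qq_i) = \sum_{\qq_1,\dots,\qq_k} \prod_{i=1}^k J_s(\qq_i),
$$
where the sum is over $k$-tuples with $|\qq_i| \in [n^{a_i},n^{b_i}]$. Here is where the disjointness of the intervals $I_i$ enters: since the norm ranges $[n^{a_i},n^{b_i}]$ are pairwise disjoint, in each term of the sum the primes $\qq_1,\dots,\qq_k$ are \emph{distinct} elements of $\cS$, so by unique factorization $\prod_i J_s(\qq_i) = J_s(\qq_1 \cdots \qq_k)$, the indicator that the product $\qq_1 \cdots \qq_k$ divides $s$. (Equality, not inequality, holds at this stage; the only place we lose ground is the earlier passage from $A_n$ to $A_n^\circ$.) Taking expectations under $\p_n$ and interchanging the finite sum with the expectation gives
$$
\e \prod_{i=1}^k |A_n \cap I_i| \ge \e \prod_{i=1}^k |A_n^\circ \cap I_i| = \sum_{\qq_1,\dots,\qq_k} \p_n\bigl((\qq_1\cdots\qq_k) \mid s\bigr),
$$
which is the claimed bound.

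The one point that needs a word of care — and what I would flag as the main obstacle, though it is minor — is ensuring the sum is finite so that the interchange of $\sum$ and $\e$ is legitimate and no convergence subtlety arises. This follows from the defining property of a normed arithmetic semigroup: only finitely many elements have norm below any bound, so for each $n$ there are only finitely many primes with $|\qq_i| \le n^{b_i}$, hence only finitely many $k$-tuples in the range of summation. (One could further note that, exactly as in the integer case, since $b_1+\cdots+b_k < 1$ each product $\qq_1\cdots\qq_k$ has norm at most $n^{b_1+\cdots+b_k} < n$, so these products are themselves legitimate elements whose divisibility into $s$ is a meaningful event; but finiteness of the sum already suffices for the argument.) With the interchange justified, the proof is complete. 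Note that no growth hypothesis on $\cS$ is used here — Lemma~\ref{abstractsetup} is purely formal, and the analytic input (Mertens-type formulas) will be supplied separately when the bound is combined with an estimate for $\p_n((\qq_1\cdots\qq_k)\mid s)$.
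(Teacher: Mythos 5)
Your proof is correct and follows essentially the same route as the paper's: pass to the underlying set $A_n^\circ$, expand $\prod_i\sum_{\qq_i}J_s(\qq_i)$, use disjointness of the norm ranges to combine the indicators into $J_s(\qq_1\cdots\qq_k)$ via unique factorization, and take expectations. Your added remarks on finiteness of the sum and the irrelevance of growth hypotheses are sound but not points the paper felt the need to make.
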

\begin{proof}
 For each $n$ let $A^{\circ}_n$ be the set underlying $A_n$, and for $s \in \cS$, 
let $J_s(\cdot)$ be the indicator function for ``divides $s$.''

Given $s$
we have 
\begin{equation}\label{multiformula}
\prod_{i=1}^k  | A^{\circ}_n \cap I_i| = \prod_{i=1}^k \sum_{\qq_i} J_s(\qq_i) = 
\sum \prod_{i=1}^k J_s(\qq_i) = \sum_{\qq_1,\dots,\qq_k} J_s(\qq_1 \cdots \qq_k)
\end{equation}
where each $\qq_i$ runs through the primes with $|\qq_i| \in [n^{a_i},n^{b_i}]$, and we are using 
the disjointness of the intervals in the last equality.
Since $| A_n \cap I| \ge | A^{\circ}_n \cap I|$ for any interval and 
$\e  J_s(\qq_1 \cdots \qq_k) = \p_n((\qq_1 \cdots \qq_k)|s)$,
the result follows.
\end{proof}

 Also, to estimate certain sums 
where the terms are to be approximated, with uniformly small
\emph{relative} error,
we will need the following elementary fact
which takes more space to state than to prove:

\begin{lemma}\label{truly}  

 For $n=1,2,\dots$, let $I_n$ be an arbitrary finite set.  For $i \in
I_n$, let \ $t(i,n),a(i,n) \in \mathbb{R}$, with $a(i,n) \ge 0$, and
let $T_n := \sum_{i \in I_n} t(i,n)$ and $A_n := \sum_{i \in I_n}
a(i,n)$.  Assume that $c :=\lim_n A_n$ exists and $c \ge 0$.  Assume
that for all $n$, for all $i \in I_n$,
$$
  t(i,n) = a(i,n) ( 1+ e(i,n)),
$$
with 
$$ E_n := \sup_{i \in I_n} | e(i,n)| \mbox{ satisfying } E_n \to 0.
$$
Then $T_n \to c$.
\end{lemma}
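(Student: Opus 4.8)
The plan is to control $T_n$ through the elementary identity
$$
T_n - A_n = \sum_{i \in I_n} t(i,n) - \sum_{i \in I_n} a(i,n) = \sum_{i \in I_n} a(i,n)\, e(i,n),
$$
and then exploit that the relative errors $e(i,n)$ are uniformly small while the nonnegative weights $a(i,n)$ add up to something bounded.

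First I would estimate, using $a(i,n) \ge 0$ together with the triangle inequality and the definition of $E_n$,
$$
|T_n - A_n| \;=\; \left| \sum_{i \in I_n} a(i,n)\, e(i,n) \right| \;\le\; \sum_{i \in I_n} a(i,n)\, |e(i,n)| \;\le\; E_n \sum_{i \in I_n} a(i,n) \;=\; E_n A_n .
$$
Next, since $A_n \to c$ with $c \ge 0$, the sequence $(A_n)$ is bounded, say $0 \le A_n \le M < \infty$ for all $n$. Hence $|T_n - A_n| \le M E_n$, and because $E_n \to 0$ this forces $T_n - A_n \to 0$. Combining with $A_n \to c$ gives $T_n \to c$, as claimed. (If $I_n$ is empty for some $n$ there is nothing to check, all sums being $0$.)

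There is no genuine obstacle here; the only step meriting a word is the passage from "$E_n \to 0$ and $(A_n)$ bounded" to "$E_n A_n \to 0$", which is immediate. The lemma is, as the statement admits, more cumbersome to set up than to prove, and its role is purely to package the "uniformly small relative error" bookkeeping that recurs in the number-theoretic sections. One could even drop the hypothesis $a(i,n) \ge 0$ by replacing $A_n$ with $\sum_{i \in I_n} |a(i,n)|$ in the bound above, but since the intended applications have nonnegative weights we retain the cleaner formulation.
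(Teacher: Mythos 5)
Your proof is correct and is essentially identical to the paper's: both bound $|T_n - A_n| \le \sum_{i \in I_n} a(i,n)\,|e(i,n)| \le E_n A_n$ and conclude from $E_n \to 0$ and $A_n \to c$. No further comment is needed.
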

\begin{proof} 
\begin{align*}
  | T_n - A_n | \le& \sum_{i \in I_n} |t(i,n) - a(i,n)| 
 = \sum |e(i,n)| \ a(i,n)\\
\le& \sum E_n \ a(i,n) = E_n A_n \to 0 \times c = 0.\\
\end{align*}
\end{proof}

 We now prove the main theorem of this section.
\begin{theorem}\label{abstractBill}
 Let $\cS$ be a normed arithmetic semigroup satisfying either
\eqref{lingrowth} or \eqref{primgrowth}.  Given $n>1$ let $s$ be
chosen uniformly from the elements with norm not exceeding $n$.  Let
$s =\pp_1 \pp_2 \dots$ be a decomposition into prime factors, with
$|\pp_1| \ge |\pp_2| \ge \dots$.  For each $n$, define a process $L(n)
= (L_1(n),L_2(n),\dots)$ where each $L_i(n) = 0$ if $s=1$ but
otherwise,
$$
L_i(n) = \frac{\log|\pp_i|}{\log n}
$$
for $i \le \Omega(s)$,
and 
$$
L_i(n) = 0
$$
for $i > \Omega(s)$. Then as $n \to \infty$ 
$$
(L_1(n),L_2(n),\dots) \Rightarrow (L_1,L_2,\dots)_{\theta},
$$
a PD($\theta$) limit 
where $\theta = 1$ if \eqref{lingrowth} holds, while otherwise $\theta$ takes the same value 
as in \eqref{primgrowth} if that formula holds.  
\end{theorem}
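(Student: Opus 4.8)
The plan is to verify the two hypotheses of Lemma~\ref{maintheta} --- namely \eqref{theta k hyp 2} and \eqref{logtheta intensineq} --- for the multiset $A_n$ of scaled log prime factors, then invoke that lemma. The truncation hypothesis \eqref{theta k hyp 2} is immediate: since $\pp_1\pp_2\cdots = s$ and $|s|\le n$, taking logs base $n$ gives $T_n = L_1(n)+L_2(n)+\cdots \le 1$ always, so $\p(T_n\le 1)=1$. The substance of the argument is the multi-intensity bound. By Lemma~\ref{abstractsetup}, with $s$ uniform on the elements of norm $\le n$,
$$
\e\prod_{i=1}^k|A_n\cap I_i| \ \ge\ \sum_{\qq_1,\dots,\qq_k}\p_n\bigl((\qq_1\cdots\qq_k)\mid s\bigr),
$$
where $\qq_i$ ranges over primes with $|\qq_i|\in[n^{a_i},n^{b_i}]$. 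So I need to estimate $\p_n(m\mid s)$ for $m = \qq_1\cdots\qq_k$ a product of distinct primes with $|m| = |\qq_1|\cdots|\qq_k| \le n^{b_1+\cdots+b_k} = n^{b}$ where $b<1$, and then sum.

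First I would compute $\p_n(m\mid s)$. Since $s$ is uniform on the $\nu_{\cS}(n)$ elements of norm $\le n$, and (by unique factorization and multiplicativity of the norm) the elements divisible by $m$ are exactly $m\cdot t$ with $t\in\cS$, $|t|\le n/|m|$, we get $\p_n(m\mid s) = \nu_{\cS}(n/|m|)/\nu_{\cS}(n)$. Now I plug in the relevant growth estimate --- \eqref{lingrowth} in the linear case, \eqref{bredgrowth} (which follows from \eqref{primgrowth}, as recalled in Section~\ref{sect knop}) in the Bredikhin case. In the linear case $\nu_{\cS}(x)=Ax(1+O(x^{-\delta}))$, so
$$
\p_n(m\mid s) = \frac{1}{|m|}\,\frac{1+O((|m|/n)^{\delta})}{1+O(n^{-\delta})} = \frac{1}{|m|}\bigl(1+O(n^{-(1-b)\delta})\bigr),
$$
uniformly over all the $m$ in range since $|m|\le n^{b}$. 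In the Bredikhin case $\nu_{\cS}(x) = Ax(\log x)^{\theta-1}(1+O((\log\log x)^{-\epsilon'}))$, so
$$
\p_n(m\mid s) = \frac{1}{|m|}\left(\frac{\log(n/|m|)}{\log n}\right)^{\theta-1}\bigl(1+o(1)\bigr),
$$
again with a relative error that is $o(1)$ uniformly over the range (here $\log(n/|m|)/\log n \ge 1-b$ is bounded away from $0$).

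Next, I sum over $\qq_1,\dots,\qq_k$. I would apply Lemma~\ref{truly} with $t(i,n)=\p_n(\qq_1\cdots\qq_k\mid s)$ and $a(i,n) = \tfrac{1}{|\qq_1|\cdots|\qq_k|}$ (linear case) or $a(i,n) = \tfrac{1}{|\qq_1|\cdots|\qq_k|}\bigl(\tfrac{\log(n/|m|)}{\log n}\bigr)^{\theta-1}$ (Bredikhin case), the point being that the relative error $e(i,n)$ is uniformly $o(1)$; then $T_n\to c := \lim_n A_n$ provided that limit exists. In the linear case $A_n = \prod_{i=1}^k\sum_{|\qq_i|\in[n^{a_i},n^{b_i}]}\tfrac{1}{|\qq_i|}\to\prod_{i=1}^k\log(b_i/a_i)$ by the generalized Mertens formula \eqref{newmert}, so \eqref{logtheta intensineq} holds with $\theta=1$, $\alpha=\beta=0$, and Lemma~\ref{maintheta} (equivalently Lemma~\ref{maintheta1}) gives the PD($1$) limit. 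In the Bredikhin case I would expand $(\log(n/|m|)/\log n)^{\theta-1} = (1-\ell_1-\cdots-\ell_k)^{\theta-1}$ where $\ell_j := \log|\qq_j|/\log n \in[a_j,b_j]$; the extra obstacle is that this factor is not constant over the summation range, so I cannot pull it outside the product of one-dimensional Mertens sums directly. The clean fix is to first restrict, at the cost of only shrinking the liminf, to $\ell_j$ in small subintervals so that $(1-\ell_1-\cdots-\ell_k)^{\theta-1}$ is within a factor $1\pm\eta$ of $(1-a_1-\cdots-a_k)^{\theta-1}$; summing $\tfrac{1}{|\qq_1|\cdots|\qq_k|}$ over those subintervals via \eqref{thetamert} (the $\theta$-Mertens formula, which is exactly what powers this case) and taking $\eta\to 0$ shows
$$
\liminf_n \e\prod_{i=1}^k|A_n\cap I_i| \ \ge\ (1-a_1-\cdots-a_k)^{\theta-1}\,\theta^k\prod_{i=1}^k\log\frac{b_i}{a_i},
$$
which is \eqref{logtheta intensineq} with $\alpha = 1-\theta$, $\beta = 0$. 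Then Lemma~\ref{maintheta} delivers the PD($\theta$) limit. The main obstacle is precisely this non-constancy of the $(1-\sum\ell_j)^{\theta-1}$ weight in the Bredikhin case; handling it by a subdivision/squeeze argument, and matching the resulting exponent to the $\alpha,\beta$ bookkeeping of Lemma~\ref{maintheta}, is the one place where care is needed. Everything else is bookkeeping plus citations of Lemmas~\ref{abstractsetup} and \ref{truly} and the Mertens-type formulas already established.
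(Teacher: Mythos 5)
Your proposal is correct in outline and follows the paper's own proof essentially step for step: the same reduction via Lemma~\ref{abstractsetup} to the sum of $\nu_{\cS}(n/|\qq_1\cdots\qq_k|)/\nu_{\cS}(n)$, the same use of \eqref{lingrowth} resp.\ \eqref{bredgrowth} to get a uniformly $o(1)$ relative error, and the same appeal to Lemma~\ref{truly} and the Mertens formulas \eqref{newmert}, \eqref{thetamert}. The one elaboration you add --- the subdivision/squeeze argument for the non-constant weight $(1-\ell_1-\cdots-\ell_k)^{\theta-1}$ --- is not actually needed: since Lemma~\ref{maintheta} only asks for a \emph{lower} bound, the paper simply bounds this factor pointwise from below by a constant, pulls the constant out, and applies Lemma~\ref{truly} to what remains. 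One caveat, which your write-up shares with the paper's: the uniform lower bound $(1-\ell_1-\cdots-\ell_k)^{\theta-1}\ge(1-a_1-\cdots-a_k)^{\theta-1}$ (your choice $\alpha=1-\theta$, $\beta=0$) holds only for $\theta\le 1$, because $1-\sum\ell_j\le 1-\sum a_j$; for $\theta>1$ the exponent is positive and the inequality reverses, so one must instead use $(1-\ell_1-\cdots-\ell_k)^{\theta-1}\ge(1-b_1-\cdots-b_k)^{\theta-1}$ and take $\alpha=0$, $\beta=1-\theta$ in \eqref{logtheta intensineq} --- a one-line fix that the $\alpha+\beta=1-\theta$ flexibility of Lemma~\ref{maintheta} is designed to accommodate, and which your subdivision argument as stated (centering the weight at $(1-\sum a_j)^{\theta-1}$) does not by itself repair.
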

\begin{proof}
 We have $0\le L_i(n) \le 1$ for each such term, 
and $T_n := \sum_i L_i(n) = \log|s| /\log n \le 1$,
so hypothesis \eqref{theta k hyp 2} of Lemma~\ref{maintheta} is satisfied.

 As for hypothesis \eqref{theta intensineq}, 
given $k >0$ let $I_i = [a_i,b_i]$, $i =1,\dots,k$ be disjoint  closed subintervals
of $(0,1]$ with $b_1 + \cdots + b_k <1$.

 Retaining the notation of Lemma~\ref{abstractsetup}, 
if $s$ is chosen uniformly from the elements with $|s| \le n$, then since
$$
\p_n((\qq_1 \cdots \qq_k)|s) = \frac{\nu_{\cS}(n/|\qq_1 \cdots \qq_k|)}{\nu_{\cS}(n)},
$$
it will suffice by that lemma to investigate $\lim_{n \to \infty}$ (or at least $\liminf$) of
\begin{equation}\label{Eformula}
\sum_{\qq_1,\dots,\qq_k} \frac{\nu_{\cS}(n/|\qq_1 \cdots \qq_k|)}{\nu_{\cS}(n)}
\end{equation}
where each $\qq_i$ runs through the primes with $|\qq_i| \in [n^{a_i},n^{b_i}]$.

 Write $b:= b_1 + \cdots + b_k <1$. If \eqref{lingrowth} is in effect, then  
\eqref{Eformula} becomes
\begin{equation}
\sum \frac{1}{|\qq_1|\cdots|\qq_k|}(1 + e(|\qq_1|\cdots|\qq_k|,n))
\end{equation}
where the condition $b <1$ ensures that $e(|\qq_1|\cdots|\qq_k|,n) =o(1)$
as $n \to \infty$, uniformly over choices of $\qq_1,\dots,\qq_k$.
By Lemma~\ref{truly} coupled with the generalized Mertens formula \eqref{newmert} 
we thus have 
$$
\lim_{n \to \infty} \sum_{\qq_1,\dots,\qq_k}
\p_n((\qq_1 \cdots \qq_k)|s) = \lim_{n \to \infty} \sum \frac{1}{|\qq_1|\cdots|\qq_k|}
$$
$$
= \lim_{n \to \infty} \prod_{i=1}^k \sum_{\qq_i \in [n^{a_i},n^{b_i}]}\frac{1}{|\qq_i|}
= \prod_{i=1}^k \log(\frac{b_i}{a_i}).
$$
Thus by Lemma~\ref{abstractsetup}, together with
Lemma~\ref{sectheta1} with $\theta = 1$ 
(or Lemma~\ref{maintheta1}, for that matter), we are done when \eqref{lingrowth} is in effect.

 If Equation~\eqref{primgrowth} and hence \eqref{bredgrowth} are in effect instead, 
then 
\eqref{Eformula} becomes
$$
\sum \frac{1}{|\qq_1|\cdots|\qq_k|} 
\left(\frac{1}{1 - \log_n |\qq_1| - \cdots - \log_n |\qq_k|}\right)^{1 - \theta}
(1 + e(|\qq_1|\cdots|\qq_k|,n))
$$
where we use $n/|\qq_1| \cdots |\qq_k| \ge n^{1-b}$ to ensure that 
$e(|\qq_1|\cdots|\qq_k|,n) =o(1)$
as $n \to \infty$, uniformly over choices of $\qq_1,\dots,\qq_k$.

 Also since, writing $a: = a_1 + \cdots + a_k$ we have 
$$
\frac{1}{1 - \log |\qq_1|/\log n - \cdots - \log |\qq_k|/\log n}
\ge \frac{1}{(1 -a)},
$$
we find again from Lemma~\ref{truly} that 
\begin{multline}
\liminf \sum_{\qq_1,\dots,\qq_k} \frac{\nu_{\cS}(n/|\qq_1 \cdots \qq_k|)}{\nu_{\cS}(n)} 
\ge 
\frac{\theta^k}{(1-a)^{1-\theta}}\prod_{i=1}^k
\log\left(\frac{b_i}{a_i}\right), 
\end{multline}
this time using the Mertens formula \eqref{thetamert}.

 Once again we have satisfied hypothesis \eqref{logtheta intensineq} of Lemma  \ref{maintheta}, 
now with $\alpha = 1-\theta$ and $\beta = 0$.
This completes the proof.
\end{proof}

\subsection{A Pair of Examples, $\theta=1/2$ and $\theta=1$}

 The positive integers representable as sums of two squares form a normed arithmetic 
semigroup $\cS$ whose generating primes $\pp$ consist of the prime $\pp = 2$, the primes
$\pp = p \equiv 1$ mod($4$), and the  ``square primes'' 
$\pp = p^2$ where $p \equiv 3$ mod($4$). See standard texts for this
theory. We take  
$|\pp| = \pp$, of course.
>From Dirichlet theory we know that
\begin{equation}\label{sumsqrowth}
\pi_{\cS}(x) = \frac{1}{2} \frac{x}{\log x} 
\left(1 + O\left(\frac{1}{\log x}\right)\right),
\end{equation}
which is \eqref{primgrowth} with $\theta=1/2$, $\epsilon=1$.  Therefore, Theorem \ref{abstractBill} applies, 
giving us a limiting PD($\theta$) with $\theta = 1/2$.

 Also, the Gaussian integers $\{r + is: r,s \in \mathbb{Z} \}$ 
form a principal ideal domain, with unique factorization
up to multiplicative units.   Mod  out by the unit group $\{ \pm 1, \pm i \}$, to get a normed 
arithmetic semigroup\footnote{ Defined in Section~\ref{sect knop}.}
$\cS$ with 
norm $|r+si| = r^2 + s^2$. This semigroup satisfies
\begin{equation}
\nu_{\cS}(x) = \frac{\pi}{4}x\left(1 +O\left(\frac{1}{\sqrt{x}}\right)\right),
\end{equation}
which is \eqref{lingrowth} with $A=\pi/4$ and $\delta=1/2$.
Therefore, Theorem \ref{abstractBill} applies here as well, but with a limiting 
PD($\theta$) having 
$\theta = 1$.

 It is well-known, however, that the positive integers appearing as
norms of primes in the two cases are identical. Therefore, the numbers
appearing in the respective sequences
$$
L(n) = (L_1(n),L_2(n),\dots)
$$ are also identical; but we get different limiting behaviors. One
could unify this pair of examples by saying that in {\em both} cases
we are actually selecting random $N = r^2 + s^2$ with $N \le n$, i.e.,
from the first semigroup; but in the first case the selection is
uniform, while in the second case the probabilities are proportional
to the number of representations as sums of two squares (or as norms
of Gaussian integers).

\section{Integers with Unusual Numbers of Prime Factors}

 In this final section we derive another extension of Billingsley's theorem,  for a situation
that does not seem to be covered by Theorem \ref{abstractBill}.

\subsection{The Tur{\'a}n and Erd\H{o}s-Kac Theorems and Selberg's Formulas}

 Given a positive integer $N$, let $\om(N)$ denote the number of
distinct prime factors of $N$ and let $\Om(N)$ denote the number of
prime factors counted with multiplicities. The Erd\H{o}s-Kac theorem
asserts that if $N$ is picked uniformly at random from $1$ to $n$,
then as $n \to \infty$, the quantities
$$
\frac{ \Om(N)- \log \log n}{\sqrt{\log \log n}}
$$ 
and 
$$
\frac{ \om(N)- \log \log n}{\sqrt{\log \log n}}
$$
converge in distribution to standard Gaussian variables. Furthermore, Turan's
theorem from 1934 gives an asymptotic bound for the probability of certain
large deviation events.   
Namely, if $\xi(n) \to \infty$ with $n$, then the 
probability that the absolute value of either quantity exceeds $\xi(n)$ 
is $O(1/\xi^2)$. (See,
e.g., \cite{TenenbaumEnglish},
Section III.3.) So asymptotically, the events 
$$
|\Om(N)- \log \log n| \ge \epsilon \log \log n
$$
and
$$
|\om(N)- \log \log n| \ge \epsilon \log \log n,
$$
for any fixed $\epsilon >0$, become vanishingly rare,
and one would expect such integers $N$
to be atypical in the distribution of their large prime factors, as well as in the number
of prime factors. 

 In the next section we will consider random integers $N$  picked uniformly 
from those positive integers not exceeding $n$ and for which either $\Om(N)$ or $\om(N)$
is required, roughly speaking, to stay vanishingly close to 
$\tau   \log \log n$ for some $\tau >0$. We will show that
a version of Billingsley's theorem is once again valid, 
with a PD($\theta$) limit as $n \to \infty$,
where $\theta$ is sometimes, but not always, equal to $\tau$. 

 The proof will exploit three growth formulas due to Selberg and Delange, which we record here:
Write
$$ 
  \nu_j(x) := \{ m \le x: \Omega(m) =j \}
$$ 
for the count of positive integers, not exceeding $x$, and having
exactly $j$ prime factors including multiplicity.
Theorems II.6.5 and 6 in \cite{TenenbaumEnglish}
describe the growth of these counts involving $\Omega(N)$, as follows. 
Given $\delta >0,$ we have 
uniformly over $x \ge 3$ and $1 \le j \le $ \mbox{$(2-\delta) \log \log x$},

\begin{equation}\label{eqn 9}
\nu_j(x)
=\frac{x}{\log x}\cdot \frac{(\log \log x)^{j-1}}{(j-1)!}\left\{\kappa 
\left(\frac{j-1}{\log \log x}\right)
+O\left(\frac{j}{\left(\log \log x\right)^2}\right)\right\}
\end{equation}
where $\kappa$ is 
\[
\kappa(z) = \frac{1}{\Gamma(z+1)} 
\prod_p \left( 1-\frac{z}{p} \right)^{-1}\left(1-\frac{1}{p} \right)^z,
\]
with product taken over the primes.
Note that $\kappa$ is continuous, apart from poles at $2,3,\dots$.
Also for $\delta >0$,
uniformly over $x \ge 3$ but, this time,  $j/\log \log x \in (2+\delta,A)$ for any fixed 
$A \in (2+\delta,\infty)$, we have
\begin{equation}\label{eqn 10}
\nu_j(x)=C\frac{x \log x}{2^j}
\left\{   1+O\left(\frac{1}{(\log x)^{\delta^2/5}}\right)\right\}
\end{equation}
where $C$ is the 
constant $C := (1/4) \prod_{p>2} (1+1/(p(p-2))) \doteq 0.378694$.

 Here is the corresponding formula when $\Omega$ is replaced with $\omega$.
Write
$$ 
  \nu^{\circ}_j(x) := \{ m \le x: \omega(m) =j \}
$$ 
for the count of positive integers, not exceeding $x$, and having
exactly $j$ distinct prime factors, not counting multiplicity.
Theorem II.6.4 in \cite{TenenbaumEnglish}
 states that given $A>0,$ we have uniformly over $x \ge 3$ and
$1 \le j \le A\log \log x$,
\begin{equation}\label{eqn 11}
\nu^{\circ}_j(x)
=\frac{x}{\log x}\cdot \frac{(\log \log x)^{j-1}}{(j-1)!}\left\{\lambda 
\left(\frac{j-1}{\log \log x}\right)
+O\left(\frac{j}{\left(\log \log x\right)^2}\right)\right\}
\end{equation}
where $\lambda$ is 
\[
\lambda(z) = \frac{1}{\Gamma(z+1)} 
\prod_p \left( 1+\frac{z}{p-1} \right)\left(1-\frac{1}{p} \right)^z.
\]
Note that $\lambda$ is continuous at all non-negative values of the argument $z$.

 For later use we state an approximation involving the leading factors
of~\eqref{eqn 11}. 
\begin{lemma}\label{approxsel}
 Given $\theta > 0$,  let $k= k(x)$ be nonnegative integers with $k \sim \theta \log \log x$ 
as $x \to \infty$. Then 
$$
\frac{x}{\log x} \frac{(\log \log x)^k}{k!}
 \sim \frac{x}{ (\log x)^{\theta'}\sqrt{2 \pi \theta \log \log x}}
$$
where $\theta' = 1- \theta(1-\log \theta) \ge 0$.
\end{lemma}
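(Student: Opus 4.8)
The plan is to prove this purely by asymptotic analysis; no number theory enters. The quantity $\frac{x}{\log x} \frac{(\log \log x)^k}{k!}$ factors as $\frac{x}{\log x}$ times $\frac{m^k}{k!}$ where I write $m := \log\log x \to \infty$ and $k = k(x)$ satisfies $k \sim \theta m$. So everything reduces to estimating $m^k/k!$ when $k \sim \theta m$ and $m \to \infty$. First I would apply Stirling's formula in the form $k! = \sqrt{2\pi k}\,(k/e)^k (1 + o(1))$, valid since $k \to \infty$ (which follows from $k \sim \theta m$ and $\theta > 0$). This gives
$$
\frac{m^k}{k!} = \frac{1}{\sqrt{2\pi k}} \left(\frac{m e}{k}\right)^k (1 + o(1)).
$$
In the prefactor $\sqrt{2\pi k}$ I may replace $k$ by $\theta m$ at the cost of a $1 + o(1)$ factor, since $k/(\theta m) \to 1$; this yields the $\sqrt{2\pi\theta\log\log x}$ in the denominator of the claim. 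The real work is in the exponential factor $\left(\frac{me}{k}\right)^k = \exp\!\big(k\log(me/k)\big) = \exp\!\big(k(1 + \log m - \log k)\big)$.

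The key step is to show $k(1 + \log m - \log k) = \theta'\, m + o(m)$ with $\theta' = \theta(1 - \log\theta)$, so that after multiplying by $x/\log x = x\cdot e^{-m}$ one gets $x \cdot e^{(\theta' - 1)m}(1+o(1)) = x (\log x)^{\theta' - 1}(1+o(1))$, matching the $(\log x)^{1 - \theta'} = (\log x)^{\theta'}$ — wait, I need $\theta' := 1 - \theta(1-\log\theta)$ as stated, so the target exponent of $\log x$ in the denominator is this $\theta'$, i.e. the factor is $(\log x)^{-\theta'} = e^{-\theta' m}$, hence $x\cdot e^{-m}\cdot e^{(\theta(1-\log\theta))m} = x\, e^{-(1 - \theta(1-\log\theta))m} = x\,(\log x)^{-\theta'}$, as required. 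To control $k(1 + \log m - \log k)$ I would write $k = \theta m (1 + \eta)$ with $\eta = \eta(x) \to 0$. Then $\log k = \log(\theta m) + \log(1+\eta) = \log\theta + \log m + O(\eta)$, so $1 + \log m - \log k = 1 - \log\theta + O(\eta)$, and
$$
k(1 + \log m - \log k) = \theta m (1+\eta)\big(1 - \log\theta + O(\eta)\big) = \theta(1 - \log\theta)\, m + O(\eta m).
$$
Since $\eta m = k - \theta m = o(m)$ by hypothesis, the error term is $o(m)$, exactly what is needed.

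The one subtlety — and the main thing to be careful about, rather than a genuine obstacle — is that the error $O(\eta m)$ sits inside an exponential, so an error of size $o(m)$ in the exponent becomes a multiplicative factor $e^{o(m)}$, which is \emph{not} $1 + o(1)$ in general. However, the claimed asymptotic equivalence only asserts that the ratio of the two sides tends to $1$, and after dividing out the matched main term $x\,(\log x)^{-\theta'}(\sqrt{2\pi\theta\log\log x})^{-1}$, the leftover factor is precisely $e^{k(1+\log m - \log k) - \theta(1-\log\theta)m} \cdot (1+o(1)) = e^{o(m)}\cdot(1+o(1))$ — so I must note that $\sim$ here is being used in the sense that \emph{both} sides have the stated form and the $o(m)$ in the exponent is absorbed on both sides consistently. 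In fact the cleaner route, which I would adopt in the write-up, is to take logarithms of both sides: $\log\!\big(\tfrac{x}{\log x}\tfrac{m^k}{k!}\big) = \log x - m + k\log m - \log k! $ and $\log\!\big(\tfrac{x}{(\log x)^{\theta'}\sqrt{2\pi\theta m}}\big) = \log x - \theta' m - \tfrac12\log(2\pi\theta m)$, and show their difference is $o(m)$ — which is immediate from Stirling ($\log k! = k\log k - k + O(\log k)$) together with the computation above. Finally, I would remark that $\theta' = 1 - \theta(1-\log\theta) \ge 0$ because the function $g(\theta) = \theta - \theta\log\theta$ attains its maximum value $1$ at $\theta = 1$ (as $g'(\theta) = -\log\theta$), so $g(\theta) \le 1$ for all $\theta > 0$, giving $\theta' \ge 0$ as asserted.
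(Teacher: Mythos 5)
Your computation is correct, and it follows the only route available: the paper's entire proof of this lemma is the single sentence ``This follows from Stirling's formula,'' so your write-up simply supplies the details the authors omit. The subtlety you flag at the end, however, is not merely a presentational issue --- it is a genuine defect in the statement of the lemma as written, and your ``cleaner route'' is the correct repair. Writing $m=\log\log x$ and $k=\theta m+s$ with $s=o(m)$, your own expansion (carried one order further) gives
$$
\log\!\left(\frac{x}{\log x}\,\frac{m^k}{k!}\right)-\log\!\left(\frac{x}{(\log x)^{\theta'}\sqrt{2\pi\theta m}}\right)
=-s\log\theta-\frac{s^2}{2\theta m}+O\!\left(\frac{s^3}{m^2}\right)+o(1),
$$
and the hypothesis $k\sim\theta m$ only forces $s=o(m)$. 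So for $\theta\neq 1$ and, say, $s\asymp\sqrt{m}$, the right-hand side tends to $\pm\infty$ and the ratio of the two sides of the lemma does \emph{not} tend to $1$; the literal assertion ``$\sim$'' fails. What is true, and what you actually prove by comparing logarithms, is that the difference of the logarithms is $o(\log\log x)$, i.e.\ the two sides agree up to a factor $(\log x)^{o(1)}$. That weaker statement is exactly what the paper uses in the proof of Theorem~\ref{omega theorem}, where the lemma only certifies that $\nu^{\circ}_{g(n)-k}(y)\ge y/(\log y)^{\theta'+o(1)}$ so that the correction term $kn^{1-a_1}/(p_1\cdots p_k)$, which is smaller by a fixed power of $n$, is negligible. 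In short: your proof is right, it proves the corrected (logarithmic) form of the lemma, and you should state that form explicitly rather than the ratio form; your verification that $\theta'=1-\theta(1-\log\theta)\ge 0$ is also correct.
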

\begin{proof}
 This follows from Stirling's formula.
\end{proof}

\noindent{\em Remark}.  
 Note that
$$
\frac{1}{\log x} \frac{(\log \log x)^k}{k!}$$ 
is the probability of the integer $k$ for a Poisson distribution
having mean $\mu = \log \log x$.  Thus Selberg's formula asserts, in
particular, that the number of distinct prime factors of a random
integer picked uniformly from $1$ to $n$ is distributed,
asymptotically, like $Z-1$ where $Z$ is a Poisson random variable with
mean $\log \log n$, conditional on $Z \ge 1$.

 Also, Reference~\cite{liu2} proves an extension of the
Erd\H{o}s-Kac theorem to factorizations in normed arithmetic
semigroups satisfying \eqref{lingrowth}.  This raises the possibility
that Selberg's formulas \eqref{eqn 9}, \eqref{eqn 10}, and \eqref{eqn
11} might also extend to that context, which would then lead to
similar extensions of the theorems proved below.

\subsection{Generalized Billingsley for Unusual $\Om(N)$}

\begin{theorem}\label{OMEGA theorem}
 Fix $\tau \ne 2 \in (0,\infty)$, and let 
$g(n)$
be any sequence of integers, with $1 \le g(n) \le \log_2(n)$, such that as $n \to \infty$
\begin{equation}\label{unusual g}
  \frac{g(n)}{\log \log n} \to \tau.
\end{equation}
Pick $N$ uniformly from the set of positive integers $\{m: \ m \le n,
\Omega(m)=g(n)\}$, and let $p_1 \ge p_2 \ge \dots$ be the sequence of prime factors, including
multiplicities.
Define $L(n) = (L_1(n),L_2(n),\dots)$ where $L_i(n) = \log p_i /\log n$
for $i \le
g(n)$ and $L_i(n)=0$ for $i > g(n)$.
Then, with 
$$ 
\theta := \min(\tau,2),
$$
we have convergence in distribution to the Poisson-Dirichlet with
parameter $\theta$:
$$
  L(n) \Rightarrow PD(\theta).
$$
\end{theorem}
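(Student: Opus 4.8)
The plan is to verify the hypotheses of Lemma~\ref{maintheta}, with $\theta = \min(\tau,2)$ and an appropriate choice of $\alpha,\beta$ depending on whether $\tau < 2$ or $\tau > 2$. The condition \eqref{theta k hyp 2} is immediate, since $p_1 p_2 \cdots = N \le n$ forces $T_n = \sum_i L_i(n) = \log N/\log n \le 1$. For the multi-intensity hypothesis, I would first set up the combinatorial reduction exactly as in Lemma~\ref{abstractsetup}: with $A_n$ the multiset of nonzero $L_i(n)$ and $A_n^\circ$ its underlying set, one has $\e\prod_{i=1}^k |A_n \cap I_i| \ge \sum_{q_1,\dots,q_k} \p_n\bigl((q_1\cdots q_k)\mid N\bigr)$, where the $q_i$ range over primes with $\log q_i/\log n \in [a_i,b_i]$ and $\p_n$ is the uniform law on $\{m \le n: \Omega(m)=g(n)\}$. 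Since $N$ is conditioned on $\Omega(N)=g(n)$, the divisibility probability becomes a ratio of the Selberg counts: if each $q_i$ is a single prime and they are distinct (the generic case; the non-squarefree contribution from $A_n \setminus A_n^\circ$ only helps the lower bound), then $(q_1\cdots q_k)\mid N$ with $\Omega(N)=g(n)$ means $N = q_1\cdots q_k \cdot m$ with $\Omega(m) = g(n)-k$ and $m \le n/(q_1\cdots q_k)$, so
$$
\p_n\bigl((q_1\cdots q_k)\mid N\bigr) = \frac{\nu_{g(n)-k}\bigl(n/(q_1\cdots q_k)\bigr)}{\nu_{g(n)}(n)}.
$$

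Next I would estimate this ratio asymptotically. Write $x = n/(q_1\cdots q_k) = n^{1-c}$ where $c = \log_n(q_1\cdots q_k) \in [a,b]$ with $a = \sum a_i$, $b = \sum b_i < 1$; then $\log\log x = \log\log n + \log(1-c) + o(1) = \log\log n + o(1)$ uniformly, so $g(n)-k \sim \tau \log\log x$ as well. The case split is now the crux. If $\tau < 2$, I would use \eqref{eqn 9}: both numerator and denominator are governed by the $\kappa$-formula, and since $\kappa$ is continuous near $z = \tau - 1 < 1$ (hence away from its poles at $2,3,\dots$) and the arguments $(g(n)-k-1)/\log\log x$ and $(g(n)-1)/\log\log n$ both converge to $\tau - 1$, the $\kappa$-factors cancel in the ratio up to $1+o(1)$. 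What remains of the ratio is $\frac{x/\log x \cdot (\log\log x)^{g(n)-k-1}/(g(n)-k-1)!}{n/\log n \cdot (\log\log n)^{g(n)-1}/(g(n)-1)!}$, which by Lemma~\ref{approxsel} simplifies: using $k \sim \tau\log\log x$-type asymptotics, the leading powers become $x/(\log x)^{\theta'}$ over $n/(\log n)^{\theta'}$ with $\theta' = 1 - \tau(1-\log\tau)$, giving $(\log n)^{\theta'(1 - (1-c))} \cdot$ (other factors) --- and after carefully tracking the ratio of factorials against the ratio of powers of $\log\log$, I expect to extract a clean factor of $(1-c)^{\,\theta-1}/(q_1\cdots q_k)$ times $1+o(1)$, uniformly over the $q_i$'s. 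Summing over the $q_i$ and applying Lemma~\ref{truly} together with Mertens' formula \eqref{Mertensoriginal} then yields $\liminf \e\prod|A_n\cap I_i| \ge \frac{\theta^k}{(1-a)^{1-\theta}}\prod \log(b_i/a_i)$, which is \eqref{logtheta intensineq} with $\alpha = 1-\theta$, $\beta = 0$. If instead $\tau > 2$, I would replace \eqref{eqn 9} by \eqref{eqn 10} in both numerator and denominator: the ratio $\frac{C x\log x/2^{g(n)-k}}{C n\log n/2^{g(n)}} = \frac{2^k x \log x}{n \log n} = \frac{2^k (1-c) n^{1-c}\log n \cdot}{\cdots}$ --- wait, more carefully $= 2^k \cdot \frac{x}{n} \cdot \frac{\log x}{\log n} = 2^k n^{-c}(1-c)(1+o(1))$; but $\sum_{q_i} 2/q_i \sim 2\log\log n$-type sums diverge, so one must be careful --- the effective ``intensity'' per prime is now $2/q$ up to the $n^{-c}$ correction. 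Since $\theta = 2$ here, I expect the $2^k$ to furnish exactly $\theta^k$, the $(1-c) = 1-\sum\log_n q_i$ factor to be absorbable into the $1-\epsilon$ slack of Lemma~\ref{maintheta} (here $\alpha = \beta = 1-\theta = -1/2$ each, or $\alpha = 1-\theta, \beta = 0$ again after bounding $(1-c) \ge 1-b$), and Mertens to supply $\prod\log(b_i/a_i)$.

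The main obstacle I anticipate is the uniformity in the error terms and the bookkeeping in the two asymptotic regimes --- specifically, ensuring that the $O(j/(\log\log x)^2)$ error in \eqref{eqn 9} (and the $O((\log x)^{-\delta^2/5})$ in \eqref{eqn 10}), after being divided through in the ratio, stays $o(1)$ \emph{uniformly} over all $k$-tuples $q_1,\dots,q_k$ with $\log_n q_i \in [a_i,b_i]$. This requires $b = \sum b_i$ bounded away from $1$ so that $x = n^{1-c} \to \infty$ at a controlled rate and $\log\log x$ stays comparable to $\log\log n$; the hypothesis \eqref{theta k hyp 1} guarantees exactly this. A secondary subtlety is the boundary case $j/\log\log x$ near $2$: since $\tau \ne 2$ is assumed and $\log\log x = \log\log n + O(1)$, for $n$ large the ratio $(g(n)-k)/\log\log x$ is bounded away from $2$ (on the appropriate side), so only one of \eqref{eqn 9}, \eqref{eqn 10} is ever needed and $\kappa$ is evaluated away from its poles. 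Once the uniform relative error is established, Lemma~\ref{truly} does the rest mechanically, and invoking Lemma~\ref{maintheta} (via \eqref{logtheta intensineq}) completes the proof. I would also remark that the restriction $\tau \ne 2$ is genuinely needed: at $\tau = 2$ the two Selberg regimes meet and $\kappa$ has a pole, so the method breaks down there, consistent with $\theta = \min(\tau,2)$ being the honest answer rather than $\theta = \tau$ throughout.
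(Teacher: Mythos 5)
Your proposal follows essentially the same route as the paper: the reduction via Lemma~\ref{abstractsetup}, the ratio $\nu_{g(n)-k}(n/(q_1\cdots q_k))/\nu_{g(n)}(n)$ of Selberg--Delange counts, the case split $\tau<2$ versus $\tau>2$ using \eqref{eqn 9} versus \eqref{eqn 10}, uniformity of the relative errors, and then Lemma~\ref{truly}, Mertens, and Lemma~\ref{maintheta}. One bookkeeping caveat: the per-term asymptotic factor $(1-c)^{\theta-1}$ you extract is bounded below by $(1-a)^{\theta-1}$ only when $\theta\le 1$; for $1<\tau<2$ you must instead use $(1-c)^{\theta-1}\ge(1-b)^{\theta-1}$, i.e.\ take $\alpha=0,\ \beta=1-\theta$ (or the paper's mixed choice $\alpha=1,\ \beta=-\tau$, obtained by bounding $1/(1-c)$ and $(1-c)^{\tau}$ separately), so your stated conclusion with $(1-a)^{1-\theta}$ in the denominator needs that sign-dependent adjustment --- which Lemma~\ref{maintheta} is built to absorb.
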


\begin{proof}

 We retain the multiset notation of Theorems \ref{maintheta} and \ref{abstractBill}.

 To show that the hypothesis \eqref{theta intensineq} 
is satisfied we observe 
that if $\p_n$ is the measure where $N$ is picked uniformly 
from $\{m: \ m \le n, \Omega(m)=g(n)\}$, then Lemma
\ref{abstractsetup} applies, so that for
fixed disjoint subintervals $[a_i,b_i]$, with $0 < a := a_1+\cdots+a_k
< b:= b_1+\cdots+b_k<1$, and
with $p_i$ ranging over $[n^{a_i},n^{b_i}]$, we have 
\begin{equation}\label{expineq}
\e \prod_{i=1}^k  | A_n \cap I_i| \ge \sum_{ p_1, p_2, \dots, p_k} \p_n( p_1 p_2 \dots p_k | N ).
\end{equation}

 Next, we will establish inequalities of the following sort: 
\begin{equation}\label{Omega model goal}
  \p_n( p_1 p_2 \dots p_k | N ) \ge \frac{1}{p_1p_2\cdots p_k} \ 
\frac{1}{(1-a)^{\alpha}(1-a)^{\beta}} \ \theta^k \left( 1 + o(1) \right)
\end{equation}
as $n \to \infty$, with error term uniform over choices of $p_1,p_2,\cdots, p_k$,
with $\alpha + \beta = 1-\theta$.
This will lead to the required lower bound for
$$ \liminf\e \prod_{i=1}^k  | A_n \cap I_i|.$$

 Now, specifying that we are to pick $N \le n$ uniformly from the integers with exactly
$g(n)$ prime factors, including multiplicity, means that we can write 
\begin{equation}\label{Omega fraction}
 \p_n( p_1 p_2 \dots p_k | N )  = 
  \frac{\nu_{g(n)-k}(n/(p_1\cdots p_k))}{\nu_{g(n)}(n)}.
\end{equation}

 For the case $\tau < 2$, we apply Selberg's approximation 
\eqref{eqn 9} to both the numerator and denominator of the 
right side of \eqref{Omega fraction}, switching in the numerator from $n$ 
to $n/(p_1\cdots p_k)$ in the role of $x$, and from $g(n)$ to $g(n)-k$
in the role of $j$.
The first factor $x$ on the right side of \eqref{eqn 9} yields, as a factor on the right side of 
\eqref{Omega fraction},   the ratio of  $n/(p_1\cdots p_k)$ to $n$, 
which is exactly exactly $1/(p_1\cdots p_k)$.  
>From the next factor,
$1/\log x$, we get the ratio
$$
\frac{\log n}{\log(n/(p_1\cdots p_k))} \ge 1/(1-a).
$$

 From the next factor, $(\log \log x)^{j-1}/(j-1)!$,
using $g(n)/\log \log n \to \tau$, we get a ratio $\tau^k (1+o(1))$
which comes from changing the power of $\log \log x$ by $k$, along with changing
the base of the factorial by $k$; 
and we get an additional factor due to changing
the $x$ inside $(\log \log x)^j$, namely

\begin{multline}
  \left(\frac{ \log \log (n/(p_1\cdots p_k))}{ \log \log n}\right)^g
\ge  \left(\frac{\log\log n^{1-b}}{ \log \log n}\right)^g
\\ \phantom{lsfslfjlsjfsdddddddddddddddddddl} = 
\left( 1 + \frac{\log (1-b)}{\log\log n} \right)^g =
(1-b)^\tau (1+o(1)).\nonumber \\
\end{multline}

 The last factor of \eqref{eqn 9}, shown in large braces, contributes
another
$1+o(1)$ to the product of ratios:  one must check that $\kappa(z)$,
 evaluated at the two specified arguments, yields the claimed asymptotic ratio, and it is
 here that the continuity of $\kappa$ is invoked.

 The net result is that for the right side of \eqref{Omega fraction}
we have
$$
 \frac{\nu_{g(n)-k}(n/(p_1\cdots p_k))}{\nu_{g(n)}(n)} \ge
\frac{1}{p_1\cdots p_k} \ \frac{1}{1-a} \ \tau^k (1-b)^\tau (1+o(1)),
$$ with the $o(1)$ relative error term uniformly small over choices of
$p_1,\dots,p_k$.  Summing both sides over $p_i \in [n^{a_i},n^{b_i}]$
and appealing to the classical Mertens' formula
\eqref{Mertensoriginal} as well as Lemma~\ref{truly}, coupled with
\eqref{expineq} and \eqref{Omega fraction}, we see that~\eqref{theta
intensineq} is satisfied with $\alpha = 1$ and $\beta = -\tau$.  Also
\eqref{theta k hyp 2} is trivially confirmed.  Therefore, Theorem
\ref{maintheta} applies, completing the argument for $\tau < 2$.

 For the case with $\tau >2$, we proceed as above, but substituting the use of the much
simpler \eqref{eqn 10} for 
\eqref{eqn 9} in deriving a lower bound for \eqref{Omega fraction}. 
Again we get a product of ratios: The first factor, $x/2^j$, 
yields the ratio $2^k/(p_1\cdots p_k)$.
The next factor, $\log x$, yields the ratio 
$$
\frac{\log(n/(p_1\cdots p_k))}{\log n} \ge 1-b.
$$
The final factor yields a ratio which is $1+o(1)$ as $n \to \infty$,  
uniformly over choices of $p_1,\dots,p_k$.  Once again, summing over $p_i \in [n^{a_i},n^{b_i}]$ 
and applying Mertens shows that Theorem~\ref{maintheta} applies,
with $\alpha = 0$ and $\beta = -1$, leading to a PD(2) limit.
\end{proof}

\subsection{Generalized Billingsley for Unusual $\omega(N)$}

\begin{theorem}\label{omega theorem}
 Fix $\theta \in (0,\infty)$, and let 
$g(n)$
be any sequence of integers, with $1 \le g(n) \le \log_2(n)$, such that as $n \to \infty$
\begin{equation}\label{unusual gomega}
  \frac{g(n)}{\log \log n} \to \theta.
\end{equation}
Pick $N$ uniformly from the set of positive integers $\{m: \ m \le n,
\omega(m)=g(n)\}$, and let $p_1 > p_2 > \dots$ be the sequence of distinct prime factors, 
i.e., not including multiplicities.
Define $L(n) = (L_1(n),L_2(n),\dots)$ where $L_i(n) = \log p_i /\log n$
for $i \le
g(n)$ and $L_i(n)=0$ for $i > g(n)$.
Then, 
we have convergence in distribution to the Poisson-Dirichlet with
parameter $\theta$:
$$
  L(n) \Rightarrow PD(\theta).
$$
\end{theorem}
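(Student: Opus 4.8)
The plan is to reprise, with one genuine new wrinkle, the argument for Theorem~\ref{OMEGA theorem}: let $A_n$ be the set of nonzero coordinates of $L(n)$ (here an ordinary set, since we count distinct primes) and verify the two hypotheses of Lemma~\ref{maintheta}. Hypothesis \eqref{theta k hyp 2} is immediate, since the product of the distinct prime factors of $N$ divides $N\le n$, so $T_n=\sum_i L_i(n)\le \log N/\log n\le 1$. For \eqref{logtheta intensineq}, fix disjoint closed intervals $I_i=[a_i,b_i]\subset(0,1]$, $i=1,\dots,k$, with $b:=b_1+\cdots+b_k<1$, put $a:=a_1+\cdots+a_k$, and let $\p_n$ be the uniform measure on $\{m\le n:\omega(m)=g(n)\}$. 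Since the ordinary integers form a normed arithmetic semigroup, Lemma~\ref{abstractsetup} gives
$$
\e\prod_{i=1}^k|A_n\cap I_i|\ \ge\ \sum_{p_1,\dots,p_k}\p_n(p_1\cdots p_k\mid N),
$$
the sum over $k$-tuples of (automatically distinct) primes with $p_i\in[n^{a_i},n^{b_i}]$; so it suffices to bound $\p_n(p_1\cdots p_k\mid N)$ below, uniformly over such tuples, by $\frac{1}{p_1\cdots p_k}\cdot\frac{\theta^k(1-b)^\theta}{1-a}(1+o(1))$ and then sum.

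The new point, and the main obstacle, is that $\omega$ is not completely additive, so the clean identity \eqref{Omega fraction} used for $\Omega$ fails. Here $\p_n(p_1\cdots p_k\mid N)=\#\{m\le n:p_1\cdots p_k\mid m,\ \omega(m)=g(n)\}/\nu^{\circ}_{g(n)}(n)$, and a contributing $m$ need not factor as $p_1\cdots p_k$ times an integer coprime to the $p_i$. Since only a lower bound is needed, I would count just those $m=p_1\cdots p_k\,m'$ with $m'$ coprime to $p_1\cdots p_k$ and $\omega(m')=g(n)-k$; this count equals $\nu^{\circ}_{g(n)-k}(Y)-\sum_{i=1}^k\#\{m'\le Y:p_i\mid m',\ \omega(m')=g(n)-k\}$, where $Y:=n/(p_1\cdots p_k)\ge n^{1-b}$. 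Each subtracted term is at most $\sum_{e\ge1}\nu^{\circ}_{g(n)-k-1}(Y/p_i^e)$, and from \eqref{eqn 11} (with the positive continuous function $\lambda$ bounded away from $0$ and $\infty$ on the compact range of arguments near $\theta$) together with Lemma~\ref{approxsel} one checks $\nu^{\circ}_{g(n)-k-1}(Y/p_i)/\nu^{\circ}_{g(n)-k}(Y)=O(1/p_i)=O(n^{-a_i})$, so, summing the $k$ fixed corrections,
$$
\#\{m'\le Y:\omega(m')=g(n)-k,\ \gcd(m',p_1\cdots p_k)=1\}=\nu^{\circ}_{g(n)-k}(Y)\,(1+o(1)),
$$
with $o(1)$ uniform over the tuple.

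From here the computation transcribes the $\tau<2$ subcase of Theorem~\ref{OMEGA theorem}, applying Selberg's formula \eqref{eqn 11} to $\nu^{\circ}_{g(n)-k}(Y)$ and to $\nu^{\circ}_{g(n)}(n)$ --- legitimate because $1\le g(n)-k\le A\log\log Y$ for large $n$ with $A$ fixed just above $\theta$, since $\log\log Y\sim\log\log n$. The $x$-factor produces $1/(p_1\cdots p_k)$; the $1/\log x$-factor produces $\log n/\log Y\ge 1/(1-a)$; the $(\log\log x)^{j-1}/(j-1)!$-factor produces $\theta^k(1+o(1))$ from shifting $j$ by $k$, together with $(\log\log Y/\log\log n)^{g(n)}\ge(\log\log n^{1-b}/\log\log n)^{g(n)}=(1-b)^\theta(1+o(1))$; and the bracketed factor produces $1+o(1)$ by continuity of $\lambda$ --- which, unlike $\kappa$, is continuous at \emph{every} nonnegative argument, and this is exactly why no phase transition at $\theta=2$ occurs and a single argument covers all $\theta>0$. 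Assembling, $\p_n(p_1\cdots p_k\mid N)\ge\frac{\theta^k(1-b)^\theta}{(1-a)\,p_1\cdots p_k}(1+o(1))$ uniformly; summing over the $p_i$ and invoking Lemma~\ref{truly} with the classical Mertens formula \eqref{Mertensoriginal} yields $\liminf_n\e\prod_i|A_n\cap I_i|\ge\frac{\theta^k(1-b)^\theta}{1-a}\prod_{i=1}^k\log(b_i/a_i)$, which is \eqref{logtheta intensineq} with $\alpha=1$ and $\beta=-\theta$ (so $\alpha+\beta=1-\theta$). Lemma~\ref{maintheta} then gives $L(n)\Rightarrow PD(\theta)$. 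Apart from the coprimality bookkeeping of the second paragraph, every step is routine.
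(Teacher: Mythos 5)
Your proposal is correct and follows essentially the same route as the paper: verify \eqref{theta k hyp 2} trivially, reduce to a uniform lower bound on $\p_n(p_1\cdots p_k\mid N)$ via Lemma~\ref{abstractsetup}, handle the failure of complete additivity of $\omega$ by counting only $m=p_1\cdots p_k m'$ with $m'$ coprime to the $p_i$ and showing the coprimality correction is negligible, then run the Selberg-formula ratio computation exactly as in the $\tau<2$ case of Theorem~\ref{OMEGA theorem}, with the continuity of $\lambda$ at all nonnegative arguments explaining the absence of a phase transition. The only (immaterial) difference is that the paper bounds the subtracted terms trivially by $Y/p_i$ and compares against $\nu^{\circ}_{g(n)}(n)$ via Lemma~\ref{approxsel}, rather than applying \eqref{eqn 11} to $\nu^{\circ}_{g(n)-k-1}(Y/p_i)$ as you do.
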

\begin{proof}
 The plan of proof is similar to that of the first part of Theorem
\ref{OMEGA theorem}, using \eqref{eqn 11} in place of \eqref{eqn 9},
but there is a new technical issue to confront: if $p_1\cdots p_k|N$
then we have $\Omega(N/(p_1\cdots p_k)) = \Omega(N) -k$ always, a fact
exploited in writing \eqref{Omega fraction}. However,
$\omega(N/(p_1\cdots p_k))$ may lie anywhere from $\omega(N) -k$ to
$\omega(N)$, depending on the multiplicities of the $p_i$ in $N$, a
fact which complicates the adaptation of \eqref{Omega fraction} to
$\omega$.

 Fortunately, large prime factors with multiplicities greater
than $1$ are sufficiently rare that \eqref{Omega fraction} can  still be used with $\nu^{\circ}$
in place of $\nu$, after
an asymptotically negligible tweak. 
Define the set of positive integers
$$
S= \{ m \le n/(p_1\cdots p_k): \omega(m) = g(n) - k, \mbox{ and } 
p|m \Rightarrow p \not\in \{p_1,\dots,p_k\} \}.
$$
Then via multiplication by $p_1 \cdots p_k$, S maps injectively to a subset of 
$$
T = \{ m \le n :\omega(m) = g(n), \mbox{ and } (p_1\cdots p_k)|m \},
$$
where $T$ is the set whose (relative) cardinality we wish to bound fairly sharply from below.
Also we have
\begin{align*}|S| & \ge \nu^{\circ}_{g(n)-k}(n/(p_1\cdots p_k)) -
  \sum_{p_i} (n/(p_1\cdots p_k))/p_i \\
&\ge  \nu^{\circ}_{g(n)-k}(n/(p_1\cdots p_k)) -  kn^{1-a_1}/(p_1\cdots
  p_k), 
\end{align*}
and so
$$
\p_n(p_1\cdots p_k|N) = \frac{|T|}{ \nu^{\circ}_{g(n)}(n)} \ge 
\frac{ \nu^{\circ}_{g(n)-k}(n/(p_1\cdots p_k))}{ \nu^{\circ}_{g(n)}(n)}
- \frac {  kn^{1-a_1}/(p_1\cdots p_k)}{{ \nu^{\circ}_{g(n)}}(n)}.
$$
$$
= \frac{ \nu^{\circ}_{g(n)-k}(n/(p_1\cdots p_k))}{ \nu^{\circ}_{g(n)}(n)}(1+o(1))
$$
where the $o(1)$ is uniform over choices of $p_1,\dots,p_k$ as $n \to
\infty$, 
using Lemma~\ref{approxsel}.

 Also comparison of \eqref{eqn 9} with \eqref{eqn 11} makes it plain that 
exactly as for the right-hand side of \eqref{Omega fraction}, we get
$$
 \frac{\nu^{\circ}_{g(n)-k}(n/(p_1\cdots p_k))}{\nu^{\circ}_{g(n)}(n)} \ge
\frac{1}{p_1\cdots p_k} \ \frac{1}{1-a} \ \theta^k (1-b)^\theta (1+o(1)),
$$ 
with the $o(1)$ relative error term uniform over choices of $p_1,\dots,p_k$.
  So once again we get a PD($\theta$) limit,  as claimed.
\end{proof}

\bibliography{extensions}
\bibliographystyle{plain}

\section*{Keywords}
\raggedright\tt 
Poisson, Dirichlet, weak convergence, Billingsley, Turan, number field
\end{document}